\documentclass[11pt,a4paper,twocolumn]{article}

\usepackage[utf8]{inputenc}
\usepackage[english]{babel}
\usepackage{amsmath,amssymb,amsthm,mathtools}
\usepackage{microtype}
\microtypesetup{expansion=false}
\usepackage{physics}
\usepackage{geometry}
\usepackage{hyperref}
\usepackage{cite}
\usepackage{abstract}
\usepackage{xcolor}
\usepackage{booktabs,tabularx,array}
\usepackage{enumitem}

\hypersetup{
    colorlinks=true,
    linkcolor=blue,
    filecolor=magenta,
    urlcolor=cyan,
    citecolor=red,
}

\newtheorem{theorem}{Theorem}
\newtheorem{proposition}[theorem]{Proposition}
\newtheorem{lemma}[theorem]{Lemma}

\newtheorem{corollary}[theorem]{Corollary}

\newtheorem{example}[theorem]{Example}

\newcommand{\HH}{\mathbb{H}}
\newcommand{\CC}{\mathbb{C}}
\newcommand{\RR}{\mathbb{R}}
\newcommand{\OO}{\mathbb{O}}
\newcommand{\HP}{\mathbb H\mathrm P}
\newcommand{\OP}{\mathbb O\mathrm P}
\newcommand{\Cl}{\mathrm{Cl}}
\newcommand{\RePart}{\operatorname{Re}}
\newcommand{\Rcomp}{\mathbin{\circledcirc}}
\newcommand{\CPM}{\operatorname{CPM}}
\newcommand{\CPstar}{\operatorname{CP}^{*}}
\newcommand{\Sp}{\mathrm{Sp}}

\newcommand{\Imag}{\operatorname{Im}}

\newcommand{\id}{\mathrm{id}}

\newcommand{\End}{\operatorname{End}}
\DeclareMathOperator{\Span}{span}

\title{\Large\textbf{Operational Foundations for Quaternionic and Octonionic Quantum Models}\\[1mm]
\normalsize Exact quaternionic channels and error correction, para-linear operators,\\
and categorical closure boundaries beyond associativity}
\author{\textbf{Santiago Pineda Montoya\textsuperscript{1} and Johan H. R\'ua Mu\~noz\textsuperscript{2}}\\
\textsuperscript{1}\textit{Institute of Mathematics, National University of Colombia}\\
\textsuperscript{2}\textit{Institute of Physics, University of Antioquia}}
\date{}

\begin{document}
\emergencystretch=3em
\raggedbottom

\twocolumn[
  \begin{@twocolumnfalse}
    \maketitle
    \begin{abstract}
\small\sloppy
A scalar field alone does not determine a quantum theory: states, effects, processes, symmetries,
composition, and discard are equally structural.  Realification illustrates this point: an orthogonal
complex structure $\mathsf J^2=-I$ selects the physical real operators and the balanced composite.
Quaternionic quantum mechanics has an analogous exact representation on a doubled complex space
selected by an antiunitary symplectic structure $\Theta^2=-I$.  Within that sector, a Choi fixed-point
condition characterizes when a complex channel admits quaternionic Kraus operators, while exact
correction of a finite-dimensional right-quaternionic code is characterized by compression
coefficients in the real center and admits an explicit recovery.  For octonions, nonassociativity precludes a unique continuation; para-linear,
categorical, sectorial, Jordan, Clifford-envelope, and Moufang models are compared by the
operational structures they retain and the additional data required to define composites, channels,
or recovery.  Across these cases, an exact ambient representation does not erase the complex or
symplectic structure that selects the physical theory.
\end{abstract}
  \end{@twocolumnfalse}
]

\section{Introduction and scope}\label{sec:introduction}

Quantum theory over the quaternions $\HH$ is a natural associative extension of the usual complex
Hilbert-space formalism.  Finite-dimensional amplitudes form right quaternionic Hilbert modules,
observables are quaternionic-Hermitian matrices, and reversible transformations belong to the
compact symplectic group.  Noncommutativity nevertheless makes several conventions essential: the
side on which scalars act, the linear argument of the inner product, the operational trace, and the
class of admissible maps must all be fixed explicitly.  Quaternionic quantum mechanics,
generalized measurements, completely positive maps, and exact complex simulation have substantial
prior literatures \cite{Adler1995,Graydon2013,Asorey2007,BhardwajSharmaKaushik2021}.  The
basis-independent real trace and the quaternionic form of Gleason's theorem were clarified by
Moretti and Oppio, while symmetry-based reductions show how a preferred complex structure can arise
inside a quaternionic Hilbert space \cite{MorettiOppioGleason2018,MorettiOppioPoincare2019,
Gantner2018}.  Ordered circuits and non-naive composition schemes have also been investigated
\cite{Razon1991,Fernandez2004,Barnum2020}.

Computational and geometric analyses provide complementary motivation for the operational questions
considered here.  Quaternionic and octonionic circuit models exhibit polynomial equivalence between
the quaternionic and standard complex computational models, together with an obstruction caused by
octonionic nonassociativity \cite{RuaMahechaPineda2025}.  Division algebras also organize Clifford
structures, projective state spaces, and Hopf fibrations, while clarifying the difference between
hypercomplex single-system kinematics and complex multipartite entanglement
\cite{RuaPineda2026Geometry}.  The remaining operational layer concerns density operators,
measurements, channels, liftability, exact error correction, and the model-dependent meaning of
octonionic dynamics.

The renewed discussion of real-number quantum mechanics makes the distinction between scalar
coordinates and operational structure especially sharp.  Real formulations with a distinguished
orthogonal complex structure go back at least to Stueckelberg \cite{Stueckelberg1960}.  Renou
\textit{et al.} later showed that ordinary real-amplitude quantum theory, equipped with the standard
real tensor product and unrestricted real observables, differs from complex quantum theory in
independent-source networks \cite{Renou2021}.  Barrios Hita \textit{et al.} study a different
operational theory: a real flag represents multiplication by $i$, physical maps respect the
associated phase gauge, and a quotient composition rule replaces the unrestricted real tensor
product; the resulting model reproduces all multipartite complex predictions
\cite{BarriosHita2026}.  Bang, Cho, and Baek identify the retained operator as a distinguished
complex structure $\mathsf J$ and the quotient as a balanced tensor product, while Maioli, Curado,
and Gazeau give an independent real symplectic formulation with a modified composition rule
\cite{BangChoBaek2026,MaioliCuradoGazeau2026}.  Tiwari's Comment emphasizes the corresponding
interpretive point: eliminating complex coordinates need not eliminate complex operational
structure \cite{Tiwari2026}.  These observations are compatible with the exact equivalence result;
they clarify which real theory is being compared with the complex one.

A separate use of real Hilbert spaces occurs in quantum gravity.  Harlow and Numasawa argue that
gauging spacetime inversions makes the closed-universe quantum-gravity state space real, and Harlow,
Usatyuk, and Zhao combine related arguments with evidence for a one-dimensional global state space
and a large effective Hilbert space for internal observers
\cite{HarlowNumasawa2026,HarlowUsatyukZhao2026}.  Those claims concern physical selection by
gravity and symmetry, rather than the coordinate realification of ordinary complex quantum
mechanics.

The common organizing principle is therefore structural rather than scalar: a quantum model is
specified not only by a base algebra, but also by its state and effect cones, admissible process
algebra, symmetries or superselection rules, composition, and discard.  The real control case embeds
complex quantum mechanics in a real ambient theory selected by $\mathsf J^2=-I$.  Quaternionic
quantum mechanics embeds in a doubled complex ambient theory selected by an antiunitary
$\Theta^2=-I$.  Both encodings preserve all probabilities because the selecting structure is
retained.  Octonions mark a different boundary: nonassociativity obstructs a single ordinary
operator product, so the operational model must be chosen before states or channels can be compared.

Two finite-dimensional quaternionic statements are developed within this framework.  Quaternionic
completely positive maps already admit intrinsic Choi--Kraus descriptions, and their complex
projections and exact simulations have been studied \cite{Asorey2007,Graydon2013,
BhardwajSharmaKaushik2021}.  Fixed-Choi criteria for real channels and channel imaginarity provide
the $\Theta^2=+I$ antecedent \cite{HickeyGour2018,ChenLei2025}.  The channel criterion below specializes this fixed-real-form mechanism to the doubled-complex
symplectic sector $\Theta^2=-I$, yielding a membership criterion and an explicit reconstruction of
quaternionic Kraus operators.  Error correction over real and
quaternionic state spaces has also been considered previously, including an early Euclidean-qubit
scheme and a recent concrete quaternionic code construction \cite{Vlasov2001,
Nyirahafashimana2025}.  Very recent work also uses a $\Theta^2=-I$ time-reversal symmetry to derive
parity selection rules for complex Kramers codes, making specified families of Knill--Laflamme
conditions automatic \cite{KubischtaTeixeira2026}.  Against the established complex and
operator-algebraic Knill--Laflamme frameworks
\cite{KnillLaflamme1997,KribsLaflammePoulinLesosky2006}, the error-correction
result identifies what the finite-dimensional right-quaternionic specialization requires: real
central compression coefficients and a right-$\HH$-linear recovery, together with---as in the
complex case---a separate treatment of the one-dimensional code, whose normalized state space is a
single point.  The scope is restricted to these finite-dimensional formulations and their consequences.

The octonionic analysis is deliberately model-specific.  Established approaches include exceptional
Jordan algebras, selected complex geometries, octonionic Hilbert modules with para-linear maps,
cochain-twisted graded categories, associative real or complex operator envelopes, and finite
Moufang loops \cite{GunaydinPironRuegg1978,DeLeoAbdelKhalek1996,DeLeoDirac1996,
GoldstineHorwitz1964,GoldstineHorwitz1966,HuoRen2022,HuoRen2023,HuoRenSabadini2025,
AlbuquerqueMajid1999,KlimMajid2010}.  They do not share one state space, one composite, or one
notion of error correction.  The objective is to identify the operational content of each route,
derive exact transport where it is available, and state what additional data would be required for a
global octonionic channel theory.

Section~\ref{sec:scalar-presentation} develops the real control case.  Sections~\ref{sec:qkinematics}
--\ref{sec:qcomposition} construct the quaternionic state, channel, Choi, error-correction, and
composition-dependent framework.  Section~\ref{sec:oct-fork} then compares the octonionic
alternatives as subsections of one operational decision tree.

\section{Scalar presentation and retained operational structure}
\label{sec:scalar-presentation}

The passage from a complex Hilbert space to its underlying real space, together with an orthogonal
operator squaring to $-I$, is the standard realification
\cite{Stueckelberg1960,MorettiOppioReal2017}.  Recent work makes the composite-system rule the
central issue: Barrios Hita \textit{et al.} use a quotient flag construction, Bang, Cho, and Baek
identify its hidden complex structure and balanced tensor product, and Maioli, Curado, and Gazeau
formulate a related real symplectic composition rule
\cite{BarriosHita2026,BangChoBaek2026,MaioliCuradoGazeau2026}.  The finite-dimensional
calculation supplies a precise control case for the quaternionic and octonionic questions.
Let $\mathcal H_{\CC}=\CC^d$ and let $V:=(\mathcal H_{\CC})_{\RR}$ denote the same additive space regarded
as a real Hilbert space.  The symbols $\Tr_{\CC}$ and $\Tr_{\RR}$ denote the ordinary matrix
traces on complex and real matrices, respectively.  For $\psi=x+iy$, with $x,y\in\RR^d$, define the realification
\begin{equation}
 \mathcal S_d(\psi):=
 \begin{pmatrix}x\\y\end{pmatrix}\in\RR^{2d},
 \qquad
 \mathsf J_d:=
 \begin{pmatrix}0&-I_d\\ I_d&0\end{pmatrix}.
 \label{eq:realification-state-J}
\end{equation}
Then $\mathsf J_d^2=-I_{2d}$,
$\mathcal S_d(i\psi)=\mathsf J_d\mathcal S_d(\psi)$, and
\begin{equation}
 \mathcal S_d(e^{i\alpha}\psi)
 =e^{\alpha\mathsf J_d}\mathcal S_d(\psi).
 \label{eq:realification-phase}
\end{equation}
Thus the complex phase orbit becomes an orbit of the planar rotation group $SO(2)$ generated by
the distinguished real-linear operator $\mathsf J_d$.  If
$g(v,w):=v^{\mathsf T}w$ is the real inner product on $V$, then for
$\psi,\phi\in\mathcal H_{\CC}$ one recovers the full complex inner product from the pair
$(g,\mathsf J_d)$:
\begin{equation}
 \langle\psi,\phi\rangle_{\CC}
 =g\!\left(\mathcal S_d\psi,\mathcal S_d\phi\right)
 -i\,g\!\left(\mathcal S_d\psi,\mathsf J_d\mathcal S_d\phi\right).
 \label{eq:realification-inner-product-reconstruction}
\end{equation}
The real metric alone retains only the real part; the distinguished complex structure restores the
imaginary part and hence the original complex Hilbert-space geometry.

For a complex matrix $A=X+iY\in M_d(\CC)$, with $X,Y\in M_d(\RR)$, define
\begin{equation}
 \mathcal T_d(A):=
 \begin{pmatrix}X&-Y\\Y&X\end{pmatrix}.
 \label{eq:realification-operator}
\end{equation}
A direct block calculation gives
\begin{align}
 \mathcal T_d(A)\mathcal S_d(\psi)
 &=\mathcal S_d(A\psi),\notag\\
 \mathcal T_d(AB)
 &=\mathcal T_d(A)\mathcal T_d(B),\notag\\
 \mathcal T_d(A^\dagger)
 &=\mathcal T_d(A)^{\mathsf T}.
 \label{eq:realification-intertwining}
\end{align}
Moreover, a real-linear operator on $V$ is complex-linear precisely when it commutes with
$\mathsf J_d$.  Hence the physical algebra of the exactly equivalent real presentation is not all
of $M_{2d}(\RR)$ but the commutant
\begin{equation}
 \End_{\CC}(\mathcal H_{\CC})
 \cong
 \{R\in\End_{\RR}(V):R\mathsf J_d=\mathsf J_dR\}.
 \label{eq:J-commutant}
\end{equation}
In particular, if every vector on the orbit in Eq.~\eqref{eq:realification-phase} is to represent the
same physical pure state, each real symmetric effect must commute with $\mathsf J_d$; otherwise the
flag rotation is measurable.  The restriction on effects and state transformations is therefore
part of the operational theory, not a notational afterthought \cite{BarriosHita2026,BangChoBaek2026}.
For a complex density matrix $\rho$ and effect $E$, define the normalized real representatives
\begin{equation}
 \rho_{\RR}:=\frac12\mathcal T_d(\rho),
 \qquad
 E_{\RR}:=\mathcal T_d(E).
 \label{eq:realification-born-representatives}
\end{equation}
Trace doubling then gives the exact Born rule
\begin{equation}
 \Tr_{\RR}(E_{\RR}\rho_{\RR})
 =\Tr_{\CC}(E\rho).
 \label{eq:realification-born}
\end{equation}
The factor $1/2$ compensates for
$\Tr_{\RR}\mathcal T_d(A)=2\RePart\Tr_{\CC}A$.  Likewise, let $A_1,\ldots,A_\ell\in M_d(\CC)$ satisfy
$\sum_{s=1}^{\ell}A_s^\dagger A_s=I_d$, and define the Kraus map
$\Phi(X):=\sum_{s=1}^{\ell}A_sXA_s^\dagger$.  With
$L_s:=\mathcal T_d(A_s)$, one has
\begin{align}
 \Phi_{\RR}(Z)
 &:=\sum_sL_sZL_s^{\mathsf T},\notag\\
 \Phi_{\RR}\!\left(\tfrac12\mathcal T_d(X)\right)
 &=\tfrac12\mathcal T_d(\Phi(X)).
 \label{eq:realification-channel}
\end{align}
Moreover,
$\sum_sL_s^{\mathsf T}L_s
 =\mathcal T_d(\sum_sA_s^\dagger A_s)=I_{2d}$,
so $\Phi_{\RR}$ is trace preserving.  Thus the distinguished $\mathsf J_d$ constrains open
dynamics as well as effects.

The same issue becomes decisive for composition.  If $V_A$ and $V_B$ are the realifications of
complex Hilbert spaces with complex structures $\mathsf J_A$ and $\mathsf J_B$, define
\begin{align}
 r_J(v,w)
 &:=\mathsf J_Av\otimes w-v\otimes\mathsf J_Bw,\notag\\
 \mathcal R_J
 &:=\Span_{\RR}\{r_J(v,w):(v,w)\in V_A\times V_B\},\notag\\
 V_A\boxtimes_J V_B
 &:=(V_A\otimes_{\RR}V_B)/\mathcal R_J.
 \label{eq:J-balanced-tensor}
\end{align}
The quotient imposes the real version of the complex balancing relation
$(iv)\otimes w=v\otimes(iw)$.  Consequently,
\begin{align}
 V_A\boxtimes_JV_B
 &\cong
 (\mathcal H_A\otimes_{\CC}\mathcal H_B)_{\RR},\notag\\
 \dim_{\RR}(V_A\boxtimes_JV_B)
 &=2d_Ad_B.
 \label{eq:J-balanced-isomorphism}
\end{align}
whereas the ordinary real tensor product has dimension $4d_Ad_B$.  The quotient-space flag
construction of Barrios Hita \textit{et al.} realizes this balanced product, and Bang, Cho, and Baek
give the explicit identification with the $\mathsf J$-balanced tensor product
\cite{BarriosHita2026,BangChoBaek2026}.  The symplectic composition of Maioli, Curado, and Gazeau
has the same structural purpose: it retains the real operator that represents multiplication by
$i$ rather than using the unrestricted real Kronecker product \cite{MaioliCuradoGazeau2026}.
Exact multipartite equivalence and retention of complex operational structure are therefore
compatible conclusions.  In network language, factorization with respect to $\boxtimes_J$ is not
the same assumption as factorization in the ordinary real tensor product; reproducing the complex
correlations changes the composite and source-factorization rule rather than invalidating the
ordinary-real foil theory tested by Renou \textit{et al.} \cite{Renou2021}.  A locality condition
on already chosen embeddings does not, by itself, select one composite rule over the other.

The control case provides both a template and a warning.  The operator $\mathsf J$ selects
complex quantum mechanics inside a larger real ambient theory; the antiunitary $\Theta$ introduced
next will select quaternionic quantum mechanics inside a larger complex ambient theory.  In each
case, exact simulation preserves all probabilities because the selecting structure remains part of
the operational specification.  The quaternionic case adds a noncommutative scalar algebra and a
noncanonical subsystem problem, while the octonionic case adds nonassociativity and therefore
requires a choice among genuinely different operator and state-space models.  This hierarchy---$\mathsf J$ inside real coordinates, $\Theta$ inside complex coordinates, and
model-specific closure beyond associativity---does more than organize notation.  It isolates the
question answered by each later construction: which ambient degrees of freedom are redundant, which
symmetry selects the physical sector, and which composition law preserves that sector.  The
quaternionic theorems make these questions exact in an associative noncommutative theory; the
octonionic comparison shows where the same strategy ceases to determine a unique process model.

\section{Quaternionic kinematics, measurements, and exact complex representation}
\label{sec:qkinematics}

This section combines the kinematic and measurement layers because they answer one continuous
question: what is a finite-dimensional quaternionic quantum system, and how are its experimental
probabilities represented exactly inside complex quantum mechanics?  We begin with the intrinsic
objects and introduce the complex representation only after states and measurements are defined.
Throughout, $N\in\mathbb N$ is a positive integer and $I_N$ denotes the identity matrix in
$M_N(\HH)$.  For a quaternion
$q=q_0+q_1i+q_2j+q_3k\in\HH$, with $q_0,q_1,q_2,q_3\in\RR$, define
\begin{align}
 \overline q&:=q_0-q_1i-q_2j-q_3k,\notag\\
 |q|_{\HH}&:=\sqrt{\overline q q}
 =\sqrt{q_0^2+q_1^2+q_2^2+q_3^2}.
 \label{eq:qscalar-norm}
\end{align}
The quantity $|q|_{\HH}$ is the multiplicative scalar norm on the division algebra $\HH$ \cite{Adler1995,Zhang1997}.
It will be kept distinct from the Hilbert norm of a vector, the trace norm of a complex matrix,
and the diamond norm of a channel.

\subsection{Right modules, inner products, and operators}

The amplitude space is
\begin{equation}
 \mathcal H_{\HH}:=\HH^N,
 \label{eq:qhilbert}
\end{equation}
viewed as a \emph{right} module over $\HH$.  Thus, for
$x=(x_1,\ldots,x_N)^{\mathsf T}\in\mathcal H_{\HH}$ and $q\in\HH$, the scalar product
$xq$ is defined componentwise.  For vectors $x,y\in\mathcal H_{\HH}$ and scalars
$a,b\in\HH$, the standard quaternionic inner product is
\begin{equation}
 \langle x,y\rangle_{\HH}:=\sum_{\alpha=1}^{N}\overline{x_\alpha}y_\alpha,
 \qquad
 \langle xa,yb\rangle_{\HH}=\overline a\,\langle x,y\rangle_{\HH}\,b.
 \label{eq:qinner}
\end{equation}
The second identity is the precise meaning of conjugate-linearity in the first argument and
right-linearity in the second.  The order of the factors is part of the definition; it cannot be
changed because quaternionic multiplication is noncommutative.  The inner product also satisfies
\begin{equation}
 \langle y,x\rangle_{\HH}=\overline{\langle x,y\rangle_{\HH}},
 \qquad
 \langle x,x\rangle_{\HH}\in\RR_{\ge0},
 \label{eq:qinner-properties}
\end{equation}
and $\langle x,x\rangle_{\HH}=0$ only when $x=0$.  The induced Hilbert norm is
\begin{equation}
 \|x\|_{\mathcal H_{\HH}}
 :=\sqrt{\langle x,x\rangle_{\HH}}
 =\left(\sum_{\alpha=1}^{N}|x_\alpha|_{\HH}^{2}\right)^{1/2}.
 \label{eq:qvector-norm}
\end{equation}
Thus $|\cdot|_{\HH}$ is used for one quaternion, whereas
$\|\cdot\|_{\mathcal H_{\HH}}$ is used for a vector in $\HH^N$.  For a complex vector
$z=(z_1,\ldots,z_d)^{\mathsf T}\in\CC^d$, we write
\begin{equation}
 \|z\|_{2,\CC}:=\left(\sum_{\alpha=1}^{d}|z_\alpha|^2\right)^{1/2}
 \label{eq:complex-vector-norm}
\end{equation}
for the ordinary complex Euclidean norm.  Channel norms are marked separately by a diamond subscript.

A map $A:\mathcal H_{\HH}\to\mathcal H_{\HH}$ is right-$\HH$-linear when
$A(xq+yr)=A(x)q+A(y)r$ for all $x,y\in\mathcal H_{\HH}$ and $q,r\in\HH$.  Every such map is
represented by a matrix in $M_N(\HH)$ acting from the left, so
\begin{equation}
 \End_{\HH}(\mathcal H_{\HH})\cong M_N(\HH).
 \label{eq:qoperator-algebra}
\end{equation}
For $A\in M_N(\HH)$, the adjoint is $A^\dagger=\overline A^{\,\mathsf T}$ and is characterized
by
\begin{equation}
 \langle Ax,y\rangle_{\HH}=\langle x,A^\dagger y\rangle_{\HH}
 \qquad (x,y\in\mathcal H_{\HH}).
 \label{eq:qadjoint}
\end{equation}
Hermitian matrices $A=A^\dagger$ represent observables.  Reversible transformations are the
quaternionic unitary matrices, collected in the compact symplectic group
\begin{equation}
 \Sp(N):=\{U\in M_N(\HH):U^\dagger U=UU^\dagger=I_N\}.
 \label{eq:spn-definition}
\end{equation}
For a Hermitian matrix $A=A^\dagger$, we write $A\ge0$ when
$\langle x,Ax\rangle_{\HH}\ge0$ for every $x\in\mathcal H_{\HH}$.  For Hermitian
$A$ and $B$, the order relation $A\le B$ means $B-A\ge0$.  A matrix $P\in M_N(\HH)$ is an
\emph{orthogonal projection} when
\begin{equation}
 P=P^\dagger=P^2.
 \label{eq:orthogonal-projection-definition}
\end{equation}
Its range $\operatorname{ran}P$ is a right-$\HH$ submodule, and $P$ acts as the identity on
that range and as zero on its orthogonal complement.  These definitions will be used for density
matrices, effects, and code subspaces below.  No irreversible dynamics is assumed at this stage;
channels and their Kraus operators are introduced only in Section~\ref{sec:channels}.

For an arbitrary matrix $A=(A_{\alpha\beta})\in M_N(\HH)$, define the real trace
\begin{equation}
 \Tr_{\HH}A:=\RePart\sum_{\alpha=1}^{N}A_{\alpha\alpha}.
 \label{eq:real-qtrace}
\end{equation}
If $A$ is Hermitian, its diagonal entries are real and this is simply their sum.  Unlike the
unmodified quaternionic diagonal sum, $\Tr_{\HH}$ is cyclic and therefore gives a real,
basis-independent pairing for states and effects
\cite{Adler1995,Graydon2013,MorettiOppioGleason2018}.

\subsection{Pure states, mixed states, effects, and POVMs}

A unit vector $\psi\in\mathcal H_{\HH}$, meaning
$\|\psi\|_{\mathcal H_{\HH}}=1$ or equivalently
$\langle\psi,\psi\rangle_{\HH}=1$, determines the rank-one projector
\begin{equation}
 \rho_\psi:=\psi\psi^\dagger,
 \qquad
 \rho_\psi x=\psi\langle\psi,x\rangle_{\HH}
 \quad (x\in\mathcal H_{\HH}).
 \label{eq:qpure-state}
\end{equation}
If $u\in\HH$ satisfies $|u|_{\HH}=1$, then $\psi$ and $\psi u$ define the same projector.  Pure
states are therefore quaternionic rays, identified with the projective space $\HP^{N-1}$.

With the operator order defined above, the mixed-state space is
\begin{equation}
 \mathcal D_N(\HH):=
 \{\rho\in M_N(\HH):\rho=\rho^\dagger,\ \rho\ge0,\ \Tr_{\HH}\rho=1\}.
 \label{eq:qstate-space}
\end{equation}
This distinguishes the amplitude space $\mathcal H_{\HH}$ from the physical state space
$\mathcal D_N(\HH)$.  In quaternionic dimension one, every Hermitian matrix is a real
$1\times1$ matrix.  Positivity and unit trace therefore force its sole entry to be $1$, so
\begin{equation}
 \mathcal D_1(\HH)=\{[1]\},
 \label{eq:qstate-space-one-dimensional}
\end{equation}
where $[1]$ denotes the $1\times1$ matrix whose unique entry is the real unit $1$.

An \emph{effect} is a Hermitian matrix $E\in M_N(\HH)$ satisfying
$0\le E\le I_N$.  Let $\Omega$ be a finite set of measurement outcomes.  A
\emph{positive-operator-valued measure} (POVM) is a family
$\{E_\omega\}_{\omega\in\Omega}\subset M_N(\HH)$ such that
\begin{equation}
 E_\omega\ge0\quad(\omega\in\Omega),
 \qquad
 \sum_{\omega\in\Omega}E_\omega=I_N.
 \label{eq:qpovm}
\end{equation}
The probability of outcome $\omega$ in the state $\rho\in\mathcal D_N(\HH)$ is the Born rule
\begin{equation}
 p(\omega\mid\rho):=\Tr_{\HH}(E_\omega\rho).
 \label{eq:qborn}
\end{equation}
Positivity gives $p(\omega\mid\rho)\ge0$, and the POVM normalization gives
$\sum_{\omega\in\Omega}p(\omega\mid\rho)=1$.
These intrinsic state and effect spaces are used twice later: Proposition~\ref{prop:state-embedding}
identifies their exact complex images, and Section~\ref{sec:qec} defines code correction by requiring
recovery on every density matrix supported on a right-$\HH$ submodule.

\subsection{The complex symplectic representation}

To compare the intrinsic theory with complex quantum mechanics, choose imaginary units
$i,j\in\HH$ satisfying $i^2=j^2=-1$ and $ij=-ji$, and set
$k:=ij$.  The selected complex slice is
\begin{equation}
 \CC_i:=\{a+bi:a,b\in\RR\}.
 \label{eq:complex-slice}
\end{equation}
Every quaternion has a unique decomposition $q=z+wj$ with $z,w\in\CC_i$, and
$jz=\overline z\,j$.  Hence every matrix $A\in M_N(\HH)$ has a unique decomposition
$A=B+Cj$ with $B,C\in M_N(\CC_i)$.  Define
\begin{equation}
 \chi(A):=
 \begin{pmatrix}
 B&C\\[-1mm]
 -\overline C&\overline B
 \end{pmatrix}\in M_{2N}(\CC).
 \label{eq:chi}
\end{equation}
For $A,D\in M_N(\HH)$, the map $\chi$ is injective and real-linear, and it satisfies
\begin{equation}
 \chi(AD)=\chi(A)\chi(D),
 \qquad
 \chi(A^\dagger)=\chi(A)^\dagger.
 \label{eq:chi-star}
\end{equation}
Thus $\chi$ faithfully represents the intrinsic right-$\HH$-linear operator algebra by complex
matrices of twice the dimension.

For every $s\in\mathbb N$, let
$\kappa_s:\CC^s\to\CC^s$ denote entrywise complex conjugation in the standard basis.  These
maps are the same basis-conjugation operation on different complex spaces; the subscript records the
dimension.  To keep later changes of code dimension explicit, for every $n\in\mathbb N$ define
\begin{align}
 J_n&:=\begin{pmatrix}0&I_n\\-I_n&0\end{pmatrix}\in M_{2n}(\CC),\notag\\
 \Theta_n&:=J_n\kappa_{2n},
 \qquad
 \alpha_n(X):=J_n\overline XJ_n^{-1}.
 \label{eq:Theta-family}
\end{align}
The map $\Theta_n$ is antiunitary, satisfies $\Theta_n^2=-I_{2n}$, and induces the
conjugate-linear involution $\alpha_n$ on $M_{2n}(\CC)$.  In the physical dimension $N$ we use the
abbreviations
\begin{equation}
 J:=J_N,
 \qquad
 \Theta:=\Theta_N,
 \qquad
 \alpha_\Theta:=\alpha_N.
 \label{eq:Theta}
\end{equation}
Thus
\begin{equation}
 \alpha_\Theta(X)=\Theta X\Theta^{-1}=J\overline XJ^{-1},
 \qquad X\in M_{2N}(\CC).
 \label{eq:alpha-theta}
\end{equation}
For $v,w\in\CC^{2N}$, antiunitarity means
$\Theta(zv)=\overline z\,\Theta(v)$ and
$\langle\Theta v,\Theta w\rangle_{\CC}=\overline{\langle v,w\rangle_{\CC}}$.
The doubled complex space is therefore constrained by a symplectic antiunitary symmetry.  The next
proposition gives the exact constraint and a practical test for deciding whether a complex matrix is
the image of a quaternionic operator.  This standard quaternionic linear-algebra characterization is treated, for
example, by Zhang \cite{Zhang1997}, and its use in quaternionic quantum dynamics is developed by
Graydon \cite{Graydon2013}.  The short proof fixes the signs and the antiunitary
convention used in all later fixed-point calculations.

\begin{proposition}[Characterization of the symplectic image]\label{prop:symplectic-image}
For a matrix $X\in M_{2N}(\CC)$, the following statements are equivalent:
\begin{enumerate}[label=(\roman*),leftmargin=1.6em]
\item there is a unique $A\in M_N(\HH)$ such that $X=\chi(A)$;
\item $X\Theta=\Theta X$;
\item $X=J\overline XJ^{-1}$.
\end{enumerate}
\end{proposition}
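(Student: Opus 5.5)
The plan is to prove (ii)$\Leftrightarrow$(iii) by a direct computation with the antiunitary $\Theta=J\kappa_{2N}$, then (i)$\Rightarrow$(iii) by a block calculation, and finally (iii)$\Rightarrow$(i) by reading off the quaternionic matrix from the block structure.

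For (ii)$\Leftrightarrow$(iii), apply both sides of $X\Theta=\Theta X$ to an arbitrary vector $v\in\CC^{2N}$. The right side gives $J\overline{Xv}=J\overline X\,\overline v$, and the left side gives $XJ\overline v$. As $v$ ranges over $\CC^{2N}$, so does $\overline v$, so (ii) is equivalent to $XJ=J\overline X$. Multiplying on the right by $J^{-1}$ yields $X=J\overline XJ^{-1}$, which is (iii); every step is reversible. Equivalently, (ii) says $X=\Theta X\Theta^{-1}=\alpha_\Theta(X)$, using Eq.~\eqref{eq:alpha-theta}.

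For (i)$\Rightarrow$(iii), write $X=\chi(A)$ with the blocks $B,C$ of Eq.~\eqref{eq:chi}. Using $J=\begin{pmatrix}0&I\\-I&0\end{pmatrix}$ and $J^{-1}=-J$, a block multiplication gives
\[
J\,\overline{\chi(A)}\,J^{-1}
=\begin{pmatrix}0&I\\-I&0\end{pmatrix}
\begin{pmatrix}\overline B&\overline C\\-C&B\end{pmatrix}
\begin{pmatrix}0&-I\\I&0\end{pmatrix}
=\begin{pmatrix}B&C\\-\overline C&\overline B\end{pmatrix},
\]
so $\chi(A)$ satisfies (iii). The main hazard in the whole proof is sign bookkeeping in this product and in the convention $\Theta=J\kappa$ rather than $\kappa J$. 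I would check it once on a single entry (for instance $N=1$ with $A=j$, so $\chi(j)=J$), since $J\overline JJ^{-1}=J$.

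For (iii)$\Rightarrow$(i), write a general $X$ in blocks as $\begin{pmatrix}P&Q\\R&S\end{pmatrix}$. Running the same computation on this general $X$ gives
\[
J\overline XJ^{-1}=\begin{pmatrix}\overline S&-\overline R\\-\overline Q&\overline P\end{pmatrix}.
\]
Condition (iii) therefore forces $S=\overline P$ and $R=-\overline Q$. Setting $B:=P$, $C:=Q$ and $A:=B+Cj$ then gives $\chi(A)=X$. Uniqueness of $A$ follows from the injectivity of $\chi$ stated before Eq.~\eqref{eq:chi-star}: the blocks $B,C$ are read off directly from $X$, and the decomposition $A=B+Cj$ is unique.
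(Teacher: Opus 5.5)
Your proof is correct and follows essentially the same route as the paper's. Both reduce (ii) to $XJ=J\overline X$ via complex conjugation, check (i)$\Rightarrow$(iii) by block multiplication, and for the converse read off $R=-\overline Q$, $S=\overline P$ from the general block form, with uniqueness coming from injectivity of $\chi$ (equivalently, the unique decomposition $A=B+Cj$).
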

\begin{proof}
Because $\kappa_{2N}X=\overline X\kappa_{2N}$, the relation
$X\Theta=\Theta X$ is equivalent to $XJ=J\overline X$, and hence to
$X=J\overline XJ^{-1}$.  Thus (ii) and (iii) are equivalent.  If $X=\chi(B+Cj)$, direct block
multiplication gives $J\overline XJ^{-1}=X$, so (i) implies (iii).  Conversely, write
$X=\begin{psmallmatrix}P&Q\\R&S\end{psmallmatrix}$ with $P,Q,R,S\in M_N(\CC)$.  Condition (iii)
becomes
\[
 \begin{pmatrix}P&Q\\R&S\end{pmatrix}
 =\begin{pmatrix}\overline S&-\overline R\\-\overline Q&\overline P\end{pmatrix},
\]
so $R=-\overline Q$ and $S=\overline P$.  Therefore $X=\chi(P+Qj)$.  Uniqueness follows from the
unique decomposition $A=B+Cj$ over $\CC_i$.
\end{proof}

Three later arguments use this characterization.  Proposition~\ref{prop:state-embedding}
reconstructs a quaternionic density matrix from a fixed complex one;
Theorem~\ref{thm:choi-quaternionic} reconstructs quaternionic Kraus operators from fixed Choi
eigenvectors; and Corollary~\ref{cor:sector-reduction} transports the same fixed-algebra description
to a chosen quaternionic subalgebra of $\OO$.  Conceptually, the quaternionic operator algebra is
the fixed algebra of $X\mapsto J\overline XJ^{-1}$; the ambient algebra $M_{2N}(\CC)$ contains many
matrices with no quaternionic interpretation.

\subsection{From an algebra embedding to an operational simulation}

The algebra embedding $\chi$ does not yet map normalized quaternionic states to normalized complex
states: its complex trace counts the two conjugate diagonal blocks.  This subsection determines the
normalization, proves that positivity is reflected in both directions, and then verifies the Born
rule.  These steps turn the fixed-point algebra into an exact operational representation.

If $A=B+Cj$, then $C$ appears only in the off-diagonal blocks of $\chi(A)$ and contributes nothing
to the complex trace, while the two diagonal blocks are $B$ and $\overline B$.  Hence
\begin{align}
 \Tr_{\CC}\chi(A)
 &=\Tr_{\CC}B+\Tr_{\CC}(\overline B)\notag\\
 &=2\RePart\Tr_{\CC}B
 =2\Tr_{\HH}A.
 \label{eq:trace-doubling}
\end{align}
The factor two is therefore the trace of a conjugate pair of complex blocks, not an additional
physical postulate.  This relation and the associated state simulation are standard in quaternionic
matrix theory and quaternionic quantum dynamics \cite{Zhang1997,Graydon2013}.

\begin{proposition}[Normalized state embedding]\label{prop:state-embedding}
The map
\begin{equation}
 \rho\longmapsto\widetilde\rho:=\tfrac12\chi(\rho)
 \label{eq:stateembed}
\end{equation}
sends $\mathcal D_N(\HH)$ bijectively onto
\begin{equation}
\begin{aligned}
 \mathcal D_{2N}^{\Theta}(\CC):=
 \{\sigma\in M_{2N}(\CC):{}&\ \sigma=\sigma^\dagger,\ \sigma\ge0,\\
 &\Tr_{\CC}\sigma=1,\ \Theta\sigma\Theta^{-1}=\sigma\}.
\end{aligned}
 \label{eq:theta-state-space}
\end{equation}
\end{proposition}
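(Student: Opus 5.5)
We verify four properties of $\widetilde\rho$ in turn: Hermiticity, unit trace, $\Theta$-invariance, and positivity. Each is then reversed to obtain surjectivity. Injectivity is immediate, because $\chi$ is injective and real-linear (Proposition~\ref{prop:symplectic-image} and Eq.~\eqref{eq:chi-star}), so $\rho\mapsto\tfrac12\chi(\rho)$ is injective on all of $M_N(\HH)$.

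\textbf{Forward direction.} Let $\rho\in\mathcal D_N(\HH)$. Hermiticity follows from $\chi(\rho^\dagger)=\chi(\rho)^\dagger$ in Eq.~\eqref{eq:chi-star}. Unit trace follows from the trace doubling Eq.~\eqref{eq:trace-doubling}, which gives $\Tr_{\CC}\widetilde\rho=\Tr_{\HH}\rho=1$. $\Theta$-invariance is implication (i)$\Rightarrow$(ii) of Proposition~\ref{prop:symplectic-image}, since $\Theta X\Theta^{-1}=X$ is equivalent to $X\Theta=\Theta X$.

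For positivity, write $\chi(\rho)=\chi(\rho^{1/2})^\dagger\chi(\rho^{1/2})$ using a quaternionic square root. To avoid relying on spectral theory over $\HH$, I would instead argue with vectors. For $x=a+bj\in\HH^N$ with $a,b\in\CC_i^N$, set $v_x:=(a,-\overline b)^{\mathsf T}$, or whichever column convention makes $\chi(A)v_x=v_{Ax}$. The first column of $\chi$ applied to a column vector gives this intertwining. One then checks that
\[
\langle v_x,\chi(A)v_x\rangle_{\CC}=\RePart\langle x,Ax\rangle_{\HH}.
\]
Every $v\in\CC^{2N}$ arises as some $v_x$, since $a$ and $b$ are arbitrary. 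For Hermitian $A$ the right side equals $\langle x,Ax\rangle_{\HH}$. Hence $\chi(A)\ge0$ if and only if $A\ge0$, and this settles positivity in both directions at once.

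\textbf{Surjectivity.} Let $\sigma\in\mathcal D^{\Theta}_{2N}(\CC)$. By (ii)$\Rightarrow$(i) of Proposition~\ref{prop:symplectic-image}, there is a unique $A$ with $2\sigma=\chi(A)$. Since $\chi(A^\dagger)=\chi(A)^\dagger=\chi(A)$, injectivity gives $A=A^\dagger$. The positivity equivalence gives $A\ge0$, and trace doubling gives $\Tr_{\HH}A=\tfrac12\Tr_{\CC}(2\sigma)=1$. Therefore $A\in\mathcal D_N(\HH)$ and $\widetilde A=\sigma$.

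\textbf{Main obstacle.} The main obstacle is the positivity step, specifically fixing the correct vector map $x\mapsto v_x$. It must be real-linear and bijective $\HH^N\to\CC^{2N}$, intertwine $A$ with $\chi(A)$, and preserve real parts of inner products. Sign and conjugation conventions in Eq.~\eqref{eq:chi} determine this. I would first compute $(B+Cj)(a+bj)=(Ba-C\overline b)+(Bb+C\overline a)j$ and match it against the action of $\chi(A)$ on the vector $(a,-\overline b)^{\mathsf T}$. If that fails, I would use the second column, $(b,\overline a)^{\mathsf T}$-type vectors, instead; one of these two choices gives the intertwining.
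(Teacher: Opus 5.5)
Your proof is correct, but it establishes positivity by a different route from the paper. The paper uses the quaternionic spectral theorem: $\rho=UDU^\dagger$ with $U\in\Sp(N)$ and $D$ real diagonal, so $\chi(\rho)=\chi(U)\operatorname{diag}(D,D)\chi(U)^\dagger$, and positivity on either side reduces to nonnegativity of the same real eigenvalues. You instead use the real-linear bijection $x=a+bj\mapsto v_x=(a,-\overline b)^{\mathsf T}$ and compare quadratic forms directly. That works straight from the paper's definition of $A\ge0$ and needs no external spectral theory.

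Your first convention is the right one, so you can drop the hedge in your last paragraph. From your expansion $Ax=(Ba-C\overline b)+(Bb+C\overline a)j$ one gets $\chi(A)v_x=(Ba-C\overline b,\,-\overline C a-\overline B\,\overline b)^{\mathsf T}=v_{Ax}$. A direct computation also gives $\langle v_x,v_y\rangle_{\CC}=a^\dagger a'+\overline{b^\dagger b'}$ for $y=a'+b'j$, which is exactly the $\CC_i$-component of $\langle x,y\rangle_{\HH}$.

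Your displayed identity, with $\RePart$ only on the right, is false for general $A$. For $A=iI_N$ and $x=e_1$ the left side is $i$ and the right side is $0$. The correct statement is that $\langle v_x,\chi(A)v_x\rangle_{\CC}$ equals the $\CC_i$-component of $\langle x,Ax\rangle_{\HH}$. That agrees with $\langle x,Ax\rangle_{\HH}$ when $A$ is Hermitian, which is the only case you use, so the argument stands.

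The spectral route also gives the eigenvalue and rank doubling the paper needs for Eq.~\eqref{eq:rank-doubling}. Your intertwiner is more elementary and self-contained, and it recovers rank doubling as well. Since $\ker\chi(A)=\{v_x:Ax=0\}$ and $x\mapsto v_x$ is a real bijection, $\dim_{\CC}\ker\chi(A)=\tfrac12\dim_{\RR}\ker A=2\dim_{\HH}\ker A$.
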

\begin{proof}
Let $\rho\in\mathcal D_N(\HH)$.  The quaternionic spectral theorem for Hermitian matrices
\cite{Zhang1997} applies because $\rho=\rho^\dagger$: there are
$U\in\Sp(N)$ and real eigenvalues $\lambda_1,\ldots,\lambda_N$ such that
$\rho=UDU^\dagger$, where $D=\operatorname{diag}(\lambda_1,\ldots,\lambda_N)$.  Positivity of
$\rho$ is equivalent to $\lambda_a\ge0$ for all $a$, and
\[
 \chi(\rho)=\chi(U)
 \begin{pmatrix}D&0\\0&D\end{pmatrix}
 \chi(U)^\dagger\ge0.
\]
Equations~\eqref{eq:chi-star}, \eqref{eq:trace-doubling}, and
Proposition~\ref{prop:symplectic-image} then show that $\widetilde\rho$ is Hermitian, positive,
trace one, and $\Theta$-fixed.

Conversely, let $\sigma\in\mathcal D_{2N}^{\Theta}(\CC)$.  Proposition~\ref{prop:symplectic-image}
gives a unique $A\in M_N(\HH)$ with $\sigma=\chi(A)$, and Hermiticity of $\sigma$ implies
$A=A^\dagger$.  Write $A=UDU^\dagger$ by the same spectral theorem.  Then
$\sigma=\chi(U)\operatorname{diag}(D,D)\chi(U)^\dagger$.  Since $\sigma\ge0$, every real diagonal
entry of $D$ is nonnegative, so $A\ge0$.  Trace doubling gives $\Tr_{\HH}A=1/2$; hence
$\rho:=2A$ lies in $\mathcal D_N(\HH)$ and $\sigma=\chi(\rho)/2$.  Uniqueness follows from
injectivity of $\chi$.
\end{proof}

State normalization and effect normalization play different roles.  The state representative is
rescaled because its complex trace doubles.  By contrast, for a quaternionic POVM
$\{E_\omega\}_{\omega\in\Omega}$ set $\widetilde E_\omega:=\chi(E_\omega)$.  Then
\[
 \sum_{\omega\in\Omega}\widetilde E_\omega
 =\chi(I_N)=I_{2N},
\]
so the effects already have the correct complex normalization; dividing them by two would instead
produce $I_{2N}/2$ and halve every probability.  Multiplicativity and trace doubling give
\begin{equation}
 \Tr_{\CC}(\widetilde E_\omega\widetilde\rho)
 =\frac12\Tr_{\CC}\chi(E_\omega\rho)
 =\Tr_{\HH}(E_\omega\rho)
 =p(\omega\mid\rho).
 \label{eq:born-simulation}
\end{equation}

The rank also doubles, but for a different reason than the trace normalization.  By the
quaternionic spectral theorem, a Hermitian matrix has a quaternionic orthonormal eigenbasis and real
eigenvalues.  Under $\chi$, each quaternionic eigenvector generates a two-dimensional complex
$\Theta$-pair with the same eigenvalue.  Consequently, if
$\operatorname{rank}_{\HH}$ denotes the quaternionic dimension of the image and
$\operatorname{rank}_{\CC}$ the ordinary complex rank, then
\begin{equation}
 \operatorname{rank}_{\CC}\widetilde\rho
 =2\operatorname{rank}_{\HH}\rho.
 \label{eq:rank-doubling}
\end{equation}
The spectral and rank statements follow from the same quaternionic spectral theorem
\cite{Zhang1997}.  A pure quaternionic
state therefore becomes a complex state of rank two rather than a complex rank-one projector.  In
particular, the unique state $[1]\in\mathcal D_1(\HH)$ maps to $I_2/2$.  The phrase ``doubling the
dimension'' means exact simulation within the $\Theta$-fixed sector, not identification with all
states of $\CC^{2N}$.  A direct eigenspace argument is included in Appendix~\ref{app:symplectic-details}.
\section{Quaternionic channels and Choi liftability}\label{sec:channels}

The goal of this section is twofold.  First, intrinsic quaternionic Kraus maps are represented as
complex CPTP maps while preserving every prepare--evolve--measure probability.  Second, the converse
problem is solved: given a complex CPTP map on the doubled space, determine from the map itself
whether it has a quaternionic Kraus realization.  The Choi matrix provides the representation-
independent answer.  We begin by fixing the channel terminology used in both directions.

Let $d,d'\in\mathbb N$ and let
$\Psi:M_d(\CC)\to M_{d'}(\CC)$ be complex-linear.  For a complex Hermitian matrix $X$, the
notation $X\ge0$ means $v^\dagger Xv\ge0$ for every $v\in\CC^d$.  The map $\Psi$ is
\emph{positive} when $X\ge0$ implies $\Psi(X)\ge0$, and \emph{completely positive} (CP)
when, for every auxiliary complex dimension $r\in\mathbb N$, the amplification
\begin{equation}
 \id_r\otimes\Psi:
 M_r(\CC)\otimes M_d(\CC)\longrightarrow
 M_r(\CC)\otimes M_{d'}(\CC)
 \label{eq:complete-positivity-definition}
\end{equation}
is positive.  It is \emph{trace-preserving} (TP) when
$\Tr_{\CC}\Psi(X)=\Tr_{\CC}X$ for every $X\in M_d(\CC)$, and \emph{unital} when
$\Psi(I_d)=I_{d'}$.  A map that is both completely positive and trace-preserving is abbreviated
\emph{CPTP}.

The Hilbert--Schmidt inner product on $M_d(\CC)$ is
\begin{equation}
 \langle X,Y\rangle_{\rm HS}:=\Tr_{\CC}(X^\dagger Y).
 \label{eq:hilbert-schmidt-inner-product}
\end{equation}
The \emph{Hilbert--Schmidt adjoint} of $\Psi$ is the unique complex-linear map
$\Psi^*:M_{d'}(\CC)\to M_d(\CC)$ satisfying
\begin{equation}
 \begin{aligned}
 \langle Y,\Psi(X)\rangle_{\rm HS}
 &=\langle\Psi^*(Y),X\rangle_{\rm HS},\\[-1mm]
 &\hspace{-8mm}X\in M_d(\CC),\qquad Y\in M_{d'}(\CC).
 \end{aligned}
 \label{eq:hilbert-schmidt-adjoint}
\end{equation}
A map is TP exactly when its Hilbert--Schmidt adjoint is unital.  In the \emph{Schr\"odinger
picture} the channel acts on states, $\rho\mapsto\Psi(\rho)$; in the dual \emph{Heisenberg
picture} the adjoint $\Psi^*$ acts on effects and observables.  For quaternionic maps the analogous
adjoint is defined using the real trace pairing:
\begin{equation}
 \Tr_{\HH}\!\left[E\,\Phi(\rho)\right]
 =\Tr_{\HH}\!\left[\Phi^*(E)\rho\right]
 \qquad(E,\rho\in M_N(\HH)).
 \label{eq:schrodinger-heisenberg-duality}
\end{equation}
These definitions are used throughout Sections~\ref{sec:channels}--\ref{sec:qcomposition}.  We
first describe intrinsic quaternionic Kraus channels and their exact complex simulation, then give
a Kraus-independent Choi test.

\subsection{Quaternionic channels and their exact complex simulation}

Let $m\in\mathbb N$ and let $A_1,\ldots,A_m\in M_N(\HH)$.  In the Schr\"odinger picture, a
quaternionic channel is the real-linear map $\Phi:M_N(\HH)\to M_N(\HH)$ defined by
\begin{equation}
 \Phi(\rho):=\sum_{r=1}^{m}A_r\rho A_r^\dagger,
 \qquad
 \sum_{r=1}^{m}A_r^\dagger A_r=I_N.
 \label{eq:qchannel}
\end{equation}
The matrices $A_r$ are called Kraus operators.  The normalization condition makes $\Phi$
trace-preserving, and the dual Heisenberg map is
$\Phi^{*}(E)=\sum_{r=1}^{m}A_r^{\dagger}EA_r$.  The Kraus form also preserves positivity at
every algebraic matrix level:
\[
 \id_s\otimes\Phi:M_s(M_N(\HH))\cong M_{sN}(\HH)
 \longrightarrow M_{sN}(\HH)
\]
is positive for each $s\in\mathbb N$.  This matrix amplification is an operator-algebraic
criterion and does not, by itself, assert a canonical tensor product of two independent
right-quaternionic systems.  In the finite-dimensional single-system setting,
Eq.~\eqref{eq:qchannel} is the operational definition adopted for an intrinsic quaternionic
channel.  Quaternionic Kraus channels, complex projections of completely positive maps, and exact
complex simulation are established results \cite{Asorey2007,Graydon2013}; the notation is fixed
here so that the later liftability and error-correction statements are unambiguous.

Define $K_r:=\chi(A_r)\in M_{2N}(\CC)$.  The represented map is
\begin{equation}
 \widetilde\Phi:M_{2N}(\CC)\longrightarrow M_{2N}(\CC),
 \qquad
 X\longmapsto\sum_{r=1}^{m}K_rXK_r^\dagger.
 \label{eq:cchannel}
\end{equation}
It is CP by its Kraus form, and
$\sum_rK_r^\dagger K_r=\chi(\sum_rA_r^\dagger A_r)=I_{2N}$ makes it TP.  For
$\rho\in\mathcal D_N(\HH)$, multiplicativity of $\chi$ gives the intertwining identity
\begin{align}
 \widetilde\Phi(\widetilde\rho)
 &=\sum_{r=1}^{m}\chi(A_r)\frac{\chi(\rho)}2\chi(A_r)^\dagger\notag\\
 &=\frac12\chi\!\left(\sum_{r=1}^{m}A_r\rho A_r^\dagger\right)
 =\frac12\chi(\Phi(\rho)).
 \label{eq:intertwine}
\end{align}
The right-hand side is $\Theta$-fixed by Proposition~\ref{prop:symplectic-image}, so the represented
channel preserves the quaternionic state sector.  For a quaternionic effect $E\in M_N(\HH)$,
trace doubling then yields
\begin{align}
 \Tr_{\CC}[\chi(E)\widetilde\Phi(\widetilde\rho)]
 &=\frac12\Tr_{\CC}\chi(E\Phi(\rho))\notag\\
 &=\Tr_{\HH}[E\Phi(\rho)].
 \label{eq:experiment-intertwining}
\end{align}
Thus state preparation, evolution, and measurement are reproduced exactly inside the constrained
complex model.

\subsection{Kraus-index dilation without a bipartite tensor product}

Noncommutativity prevents two right-$\HH$ modules from determining a canonical subsystem tensor
product, but a Kraus label is not itself a second physical subsystem.  A finite Kraus family can be
stored in a direct-sum multiplicity module, which is enough to dilate the Heisenberg adjoint of a
single-system channel.  This distinction is what allows the channel and error-correction theory to be
formulated before any choice of multipartite composition.  The construction is the finite-dimensional
quaternionic instance of Stinespring theory for Hilbert modules
\cite{Paschke1973,BhatRameshSumesh2012,Skeide2012}.

\begin{proposition}[Kraus-index direct-sum dilation]\label{prop:direct-sum-dilation}
Let $\Phi$ be the quaternionic channel in Eq.~\eqref{eq:qchannel}.  On the right quaternionic Hilbert
module
\[
 \mathfrak H_m:=\bigoplus_{r=1}^{m}\HH^N
\]
with componentwise right scalar action and inner product
$\langle\bigoplus_r\xi_r,\bigoplus_r\eta_r\rangle:=\sum_r
\langle\xi_r,\eta_r\rangle_{\HH}$, define
\begin{equation}
 V\psi:=\bigoplus_{r=1}^{m}A_r\psi,
 \qquad
 \pi(E):=\bigoplus_{r=1}^{m}E.
 \label{eq:direct-sum-dilation-data}
\end{equation}
Then $V:\HH^N\to\mathfrak H_m$ is a right-$\HH$-linear isometry,
$\pi:M_N(\HH)\to\End_{\HH}(\mathfrak H_m)$ is a unital $*$-representation, and
\begin{equation}
 \Phi^*(E)=V^\dagger\pi(E)V
 \qquad(E\in M_N(\HH)).
 \label{eq:direct-sum-dilation}
\end{equation}
\end{proposition}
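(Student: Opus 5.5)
The plan is to verify the three claims directly from the definitions in Eq.~\eqref{eq:direct-sum-dilation-data}, using only associativity of $M_N(\HH)$ and the adjoint characterization \eqref{eq:qadjoint}. First I will check that $\mathfrak H_m$ is a right quaternionic Hilbert module. The right action is componentwise, and the inner product is a sum of standard ones, so sesquilinearity in the form $\langle\xi a,\eta b\rangle=\overline a\langle\xi,\eta\rangle b$ follows termwise from \eqref{eq:qinner}. Positivity and definiteness follow from \eqref{eq:qinner-properties}. Concretely, $\mathfrak H_m\cong\HH^{mN}$ with the standard inner product, so $\End_{\HH}(\mathfrak H_m)\cong M_{mN}(\HH)$ and adjoints are conjugate transposes. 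Under this identification, $V$ is the $mN\times N$ block column with blocks $A_1,\ldots,A_m$, and $\pi(E)=I_m\otimes E$ is the block-diagonal matrix $\operatorname{diag}(E,\ldots,E)$. Here the Kronecker symbol involves only real identity entries, so no quaternionic tensor product is needed.

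Next comes right-linearity and the isometry property. Right-$\HH$-linearity of $V$ holds because each $A_r$ acts from the left: $A_r(\psi q)=(A_r\psi)q$. The adjoint $V^\dagger:\mathfrak H_m\to\HH^N$ is $V^\dagger(\bigoplus_r\eta_r)=\sum_rA_r^\dagger\eta_r$. This follows from
\[
 \langle V\psi,\textstyle\bigoplus_r\eta_r\rangle=\sum_r\langle A_r\psi,\eta_r\rangle_{\HH}=\sum_r\langle\psi,A_r^\dagger\eta_r\rangle_{\HH}
\]
by \eqref{eq:qadjoint}, together with additivity of the inner product in its second slot. Then $V^\dagger V=\sum_rA_r^\dagger A_r=I_N$ by the Kraus normalization in \eqref{eq:qchannel}. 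Equivalently, $\langle V\psi,V\phi\rangle=\langle\psi,\phi\rangle_{\HH}$, so $V$ is an isometry.

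For $\pi$, unitality is immediate: $\pi(I_N)$ is the identity of $\mathfrak H_m$. Real-linearity and multiplicativity $\pi(EF)=\pi(E)\pi(F)$ follow blockwise. The $*$-property $\pi(E^\dagger)=\pi(E)^\dagger$ follows because the adjoint of a block-diagonal operator on the orthogonal direct sum is blockwise, again by \eqref{eq:qadjoint} applied in each summand. Finally, the dilation identity is the composition
\[
 V^\dagger\pi(E)V\psi=V^\dagger\textstyle\bigoplus_rEA_r\psi=\sum_rA_r^\dagger EA_r\psi=\Phi^*(E)\psi.
\]
This matches the Heisenberg map stated after \eqref{eq:qchannel}. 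I will also verify that this expression is indeed the real-trace dual \eqref{eq:schrodinger-heisenberg-duality}. That uses cyclicity of $\Tr_{\HH}$, which gives $\Tr_{\HH}(EA_r\rho A_r^\dagger)=\Tr_{\HH}(A_r^\dagger EA_r\rho)$.

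The only delicate step, and the one I expect to need the most care, is keeping the scalar side consistent. Operators act on the left and scalars on the right, so every identity must be checked without commuting quaternions past matrices. The cyclicity of $\Tr_{\HH}$, which holds only after taking the real part, is the point where noncommutativity could otherwise cause a sign or ordering error.
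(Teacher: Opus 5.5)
Your proposal is correct and follows essentially the same route as the paper. The paper likewise computes $V^\dagger\xi=\sum_rA_r^\dagger\xi_r$, reads off $V^\dagger V=I_N$ and $V^\dagger\pi(E)V=\Phi^*(E)$ from the Kraus normalization, and gets the unital $*$-representation properties from block-diagonal multiplication; your extra checks (the identification $\mathfrak H_m\cong\HH^{mN}$ and the real-trace duality via cyclicity of $\Tr_{\HH}$) are consistent with this but not needed for the statement.
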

\begin{proof}
For $\xi=\bigoplus_r\xi_r\in\mathfrak H_m$, the adjoint is
$V^\dagger\xi=\sum_rA_r^\dagger\xi_r$.  Therefore
\begin{align}
 V^\dagger V\psi
 &=\sum_rA_r^\dagger A_r\psi=\psi,\notag\\
 V^\dagger\pi(E)V\psi
 &=\sum_rA_r^\dagger EA_r\psi=\Phi^*(E)\psi.
 \label{eq:direct-sum-dilation-check}
\end{align}
Block-diagonal multiplication also gives $\pi(EF)=\pi(E)\pi(F)$,
$\pi(E^\dagger)=\pi(E)^\dagger$, and $\pi(I_N)=I_{\mathfrak H_m}$.
\end{proof}

The proposition solves a dilation problem, not the subsystem-composition problem.  No partial trace,
locality notion, or independent environment state is produced by the direct sum.  Its later use is
more limited and more precise: the recovery channel constructed in
Theorem~\ref{thm:KLH} has such a dilation, so its implementation does not depend on a canonical
quaternionic bipartite tensor product.  Genuine erasure and complementary-channel statements are
deferred to Section~\ref{sec:qcomposition}, where a composite and a discard map are specified first.

\subsection{The Choi matrix and a Kraus-independent symmetry test}

A channel can have many different Kraus families, so the condition that a chosen family commute
with $\Theta$ is not yet an intrinsic test.  The Choi matrix removes this dependence.  The ordinary
complex Choi representation and the equivalence between complete positivity and positivity of the
Choi matrix are due to Choi \cite{Choi1975}.  Set
$d:=2N$, let $\{|\alpha\rangle\}_{\alpha=1}^{d}$ be the standard basis of $\CC^d$, and let
$\Psi:M_d(\CC)\to M_d(\CC)$ be a complex linear map.  Its Choi matrix is
\begin{equation}
 C_\Psi:=\sum_{\alpha,\beta=1}^{d}
 |\alpha\rangle\langle\beta|\otimes
 \Psi(|\alpha\rangle\langle\beta|)
 \in M_{d^2}(\CC).
 \label{eq:choi-definition}
\end{equation}
The first tensor factor is the input factor and the second is the output factor.  The partial trace
over the output factor is the linear map
$\Tr_2:M_d(\CC)\otimes M_d(\CC)\to M_d(\CC)$ determined by
\begin{equation}
 \Tr_2(A\otimes B):=\Tr_{\CC}(B)A.
 \label{eq:partial-trace-definition}
\end{equation}
With this convention, $\Psi$ is trace-preserving exactly when $\Tr_2C_\Psi=I_d$.
Suppose $\ell\in\mathbb N$ and
$\Psi(X)=\sum_{s=1}^{\ell}L_sXL_s^\dagger$ with
$L_s=(L^{(s)}_{\alpha\beta})\in M_d(\CC)$.  Column vectorization is the map
\begin{equation}
 |L_s\rangle\!\rangle:=\operatorname{vec}(L_s)
 :=\sum_{\alpha,\beta=1}^{d}L^{(s)}_{\alpha\beta}
 |\beta\rangle\otimes|\alpha\rangle
 \in\CC^d\otimes\CC^d.
 \label{eq:vectorization-definition}
\end{equation}
With this convention,
\begin{equation}
 C_\Psi=\sum_{s=1}^{\ell}|L_s\rangle\!\rangle\langle\!\langle L_s|.
 \label{eq:choi-kraus}
\end{equation}
Under the standard identification $\CC^d\otimes\CC^d\cong\CC^{d^2}$, the conjugation on
the Choi-vector space is $\kappa_{d^2}$ from the family defined before
Eq.~\eqref{eq:Theta}.  Define
\begin{equation}
 \mathfrak C:=(J\otimes J)\kappa_{d^2}.
 \label{eq:choi-real}
\end{equation}
Because $J^2=-I_d$, the two minus signs cancel and $\mathfrak C^2=I_{d^2}$.  Choi's construction \cite{Choi1975} packages the action of the channel on all matrix units into one
positive operator.  Quaternionic completely positive maps, complex projections, exact complex
simulations, and intrinsic Choi--Kraus-type representations are already available
\cite{Asorey2007,Graydon2013,BhardwajSharmaKaushik2021}.  For the real structure of imaginarity
theory, reality of the Choi matrix is tied to real Kraus representations, and later
channel-resource formulations use the Choi state directly \cite{HickeyGour2018,ChenLei2025}.  The
criterion below addresses the corresponding $\Theta^2=-I$ membership problem on the doubled
complex space.  The point is to identify the fixed Choi vectors with $\chi(M_N(\HH))$ and to
descend complex trace preservation to a quaternionic Kraus family.

\begin{theorem}[Antiunitary Choi fixed-point criterion]\label{thm:choi-quaternionic}
Let $\Psi:M_d(\CC)\to M_d(\CC)$ be CPTP, with $d=2N$.  The following conditions are
equivalent:
\begin{enumerate}[label=(\roman*),leftmargin=1.6em]
\item there are $\ell\in\mathbb N$ and matrices
$L_1,\ldots,L_\ell\in M_d(\CC)$ forming a Kraus family for $\Psi$ such that
$L_s\Theta=\Theta L_s$ for every $s\in\{1,\ldots,\ell\}$;
\item its Choi matrix satisfies
\begin{equation}
 \mathfrak C C_\Psi\mathfrak C^{-1}=C_\Psi.
 \label{eq:choi-fixed}
\end{equation}
\end{enumerate}
When these conditions hold, a compatible Kraus family can be chosen with every $L_s$ in the
image of $\chi$, and $\Psi$ is the complex representation of a quaternionic channel.
\end{theorem}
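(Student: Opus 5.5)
The plan is to reduce everything to one intertwining identity between the antiunitary $\mathfrak C$ on Choi vectors and the involution $\alpha_\Theta$ on matrices. First I will show that for every $L\in M_d(\CC)$,
\[
 \mathfrak C\,|L\rangle\!\rangle=|\alpha_\Theta(L)\rangle\!\rangle=|J\overline LJ^{-1}\rangle\!\rangle .
\]
This follows from the column-vectorization rule $\operatorname{vec}(AXB)=(B^{\mathsf T}\otimes A)\operatorname{vec}(X)$. With the convention of Eq.~\eqref{eq:vectorization-definition}, the first tensor slot carries the column index, so the rule holds. Since $J^{\mathsf T}=-J$ and $J^{-1}=-J$, we have $(J^{-1})^{\mathsf T}=J$. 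Hence
\[
 \operatorname{vec}(J\overline LJ^{-1})=(J\otimes J)\operatorname{vec}(\overline L)=(J\otimes J)\kappa_{d^2}\operatorname{vec}(L)=\mathfrak C\operatorname{vec}(L).
\]
Together with Proposition~\ref{prop:symplectic-image}, (ii)$\Leftrightarrow$(iii), this says that $L\Theta=\Theta L$ holds exactly when $|L\rangle\!\rangle$ is a fixed vector of $\mathfrak C$. I will also record two facts. First, $\mathfrak C$ is antiunitary, because $J\otimes J$ is unitary and $\kappa_{d^2}$ is antiunitary. Second, $\mathfrak C^2=I$. Consequently, for every vector $v$, $\mathfrak C\,|v\rangle\langle v|\,\mathfrak C^{-1}=|\mathfrak Cv\rangle\langle\mathfrak Cv|$.

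For (i)$\Rightarrow$(ii), I take a Kraus family with $L_s\Theta=\Theta L_s$. Then Eq.~\eqref{eq:choi-kraus} and the rank-one conjugation rule give
\[
 \mathfrak C C_\Psi\mathfrak C^{-1}=\sum_s|\mathfrak C L_s\rangle\!\rangle\langle\!\langle\mathfrak C L_s|=\sum_s|L_s\rangle\!\rangle\langle\!\langle L_s|=C_\Psi .
\]

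For (ii)$\Rightarrow$(i), $C_\Psi$ is positive semidefinite by Choi's theorem, and by hypothesis it commutes with the antiunitary involution $\mathfrak C$. Each eigenspace $E_\lambda$ of $C_\Psi$ is therefore $\mathfrak C$-invariant. I will show that each $E_\lambda$ has an orthonormal basis of $\mathfrak C$-fixed vectors. The restriction of $\mathfrak C$ to $E_\lambda$ is a conjugation, so $E_\lambda=F\oplus iF$ with $F:=\{v\in E_\lambda:\mathfrak Cv=v\}$. This holds because $v=\tfrac12(v+\mathfrak Cv)+i\cdot\tfrac1{2i}(v-\mathfrak Cv)$ and both summands are fixed. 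For $v,w\in F$, antiunitarity gives $\langle v,w\rangle=\overline{\langle v,w\rangle}$, so the complex inner product is real on $F$. Gram--Schmidt in the real space $F$ then produces a real-orthonormal set that is complex-orthonormal and spans $E_\lambda$ over $\CC$.

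This gives $C_\Psi=\sum_s\lambda_s|v_s\rangle\!\rangle\langle\!\langle v_s|$ with every $v_s$ fixed by $\mathfrak C$ and every $\lambda_s\ge0$. I then set $L_s:=\sqrt{\lambda_s}\,\operatorname{vec}^{-1}(v_s)$. The Choi matrix determines the map, so Eq.~\eqref{eq:choi-kraus} shows that $\{L_s\}$ is a Kraus family for $\Psi$. By the intertwining identity, each $L_s$ commutes with $\Theta$.

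For the final claim, Proposition~\ref{prop:symplectic-image} gives unique $A_s\in M_N(\HH)$ with $L_s=\chi(A_s)$. Trace preservation of $\Psi$ gives $\sum_sL_s^\dagger L_s=I_d$. Using Eq.~\eqref{eq:chi-star}, this reads $\chi\bigl(\sum_sA_s^\dagger A_s\bigr)=\chi(I_N)$, and injectivity of $\chi$ yields $\sum_sA_s^\dagger A_s=I_N$. Hence $\Psi=\widetilde\Phi$ for the quaternionic channel $\Phi$ with Kraus operators $A_s$, as in Eq.~\eqref{eq:cchannel}.

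I expect the main obstacle to be the sign and ordering bookkeeping in the intertwining identity. The transpose of $J^{-1}$ must come out as $+J$, which is what makes $\mathfrak C$ built from $J\otimes J$ rather than $J\otimes J^{-1}$. I also need to check that $\mathfrak C^2=+I$, even though $\Theta^2=-I$; this is what makes the fixed-vector (real-form) argument available. If the convention check fails, I would recompute the identity directly on matrix units $|\alpha\rangle\langle\beta|$.
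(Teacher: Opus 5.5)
Your proposal is correct and follows essentially the same route as the paper. Both arguments rest on the intertwining identity $\mathfrak C|L\rangle\!\rangle=|J\overline LJ^{-1}\rangle\!\rangle$, invariance of each rank-one term for (i)$\Rightarrow$(ii), a $\mathfrak C$-fixed orthonormal eigenbasis of $C_\Psi$ for (ii)$\Rightarrow$(i), and injectivity of $\chi$ to descend the normalization. The only cosmetic difference is that you read $\sum_sL_s^\dagger L_s=I_d$ directly off trace preservation of the Kraus form, whereas the paper derives it from $\Tr_2C_\Psi=I_d$ and the identity $\Tr_2(|L\rangle\!\rangle\langle\!\langle L|)=(L^\dagger L)^{\mathsf T}$.
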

\begin{proof}
The relation $L\Theta=\Theta L$ is equivalent to
$L=J\overline LJ^{-1}$.  For the vectorization convention
in Eq.~\eqref{eq:vectorization-definition}, the standard identity
$\operatorname{vec}(AXB)=(B^{\mathsf T}\otimes A)\operatorname{vec}(X)$ gives
\begin{align}
 \operatorname{vec}(J\overline LJ^{-1})
 &=\bigl((J^{-1})^{\mathsf T}\otimes J\bigr)
   \kappa_{d^2}\operatorname{vec}(L)\notag\\
 &=(J\otimes J)\kappa_{d^2}\operatorname{vec}(L)
 =\mathfrak C|L\rangle\!\rangle,
 \label{eq:choi-vector-fixed-calculation}
\end{align}
because $J^{-1}=-J$ and $J^{\mathsf T}=-J$.  Thus
\[
 L\Theta=\Theta L
 \quad\Longleftrightarrow\quad
 \mathfrak C|L\rangle\!\rangle=|L\rangle\!\rangle.
\]
If every Kraus operator has this property, each rank-one term in
Eq.~\eqref{eq:choi-kraus} is fixed by conjugation with $\mathfrak C$, proving (i)$\Rightarrow$(ii).

Conversely, assume $C_\Psi\ge0$ and
$\mathfrak C C_\Psi\mathfrak C^{-1}=C_\Psi$.  Since $\mathfrak C$ is an antiunitary involution,
each eigenspace $\mathcal E_\lambda$ of $C_\Psi$ is $\mathfrak C$-stable.  Its fixed set
\[
 \mathcal E_\lambda^{\mathfrak C}
 :=\{v\in\mathcal E_\lambda:\mathfrak C v=v\}
\]
is a real Hilbert space and
$\mathcal E_\lambda=\mathcal E_\lambda^{\mathfrak C}
 \oplus i\mathcal E_\lambda^{\mathfrak C}$.  The sum is direct: if a vector is both fixed and
of the form $iw$ with $w$ fixed, anti-linearity gives $iw=\mathfrak C(iw)=-iw$, hence the vector
vanishes.
Hence one may choose an orthonormal eigenbasis
$\{v_{\lambda,t}\}_t$ with every $v_{\lambda,t}$ fixed by $\mathfrak C$.
For each nonzero eigenvalue, define
\[
 |L_{\lambda,t}\rangle\!\rangle
 :=\sqrt{\lambda}\,v_{\lambda,t}.
\]
Then
$C_\Psi=\sum_{\lambda,t}
 |L_{\lambda,t}\rangle\!\rangle\langle\!\langle L_{\lambda,t}|$,
so the reshaped matrices $L_{\lambda,t}$ form a Kraus family for $\Psi$.
Equation~\eqref{eq:choi-vector-fixed-calculation} shows that each of them commutes with
$\Theta$.  Proposition~\ref{prop:symplectic-image} therefore gives unique
$A_{\lambda,t}\in M_N(\HH)$ such that
$L_{\lambda,t}=\chi(A_{\lambda,t})$.  Finally, trace preservation is encoded by
$\Tr_2 C_\Psi=I_d$, with $\Tr_2$ defined in Eq.~\eqref{eq:partial-trace-definition}.  With the input--output vectorization convention
of Eq.~\eqref{eq:vectorization-definition},
\[
 \Tr_2\!\left(|L\rangle\!\rangle\langle\!\langle L|\right)
   =(L^\dagger L)^{\mathsf T}.
\]
Consequently $\Tr_2 C_\Psi=I_d$ is equivalent to
$(\sum_{\lambda,t}L_{\lambda,t}^\dagger L_{\lambda,t})^{\mathsf T}=I_d$, and hence to
$\sum_{\lambda,t}L_{\lambda,t}^\dagger L_{\lambda,t}=I_d$.  Multiplicativity and
injectivity of $\chi$ then imply
$\sum_{\lambda,t}A_{\lambda,t}^\dagger A_{\lambda,t}=I_N$.  Hence this Kraus family is
exactly the complex representation of a trace-preserving quaternionic channel, proving
(ii)$\Rightarrow$(i).
\end{proof}

\begin{corollary}[Antiunitary equivariance]\label{cor:choi-equivariance}
Recall the matrix involution $\alpha_\Theta$ from Eq.~\eqref{eq:alpha-theta}.
For a complex CPTP map $\Psi:M_d(\CC)\to M_d(\CC)$, the two conditions in
Theorem~\ref{thm:choi-quaternionic} are also equivalent to
\begin{equation}
 \Psi\!\circ\!\alpha_\Theta
 =\alpha_\Theta\!\circ\!\Psi.
 \label{eq:theta-equivariance}
\end{equation}
\end{corollary}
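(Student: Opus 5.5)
The plan is to prove a single identity that compares both conditions directly:
\[
 \mathfrak C\,C_\Psi\,\mathfrak C^{-1}=C_{\alpha_\Theta\circ\Psi\circ\alpha_\Theta}.
\]
First, some preliminaries. Since $\Theta^2=-I_d$, conjugation by $\Theta$ is an involution, so $\alpha_\Theta\circ\alpha_\Theta=\id$. Because $\alpha_\Theta$ is conjugate-linear, the composite $\Psi':=\alpha_\Theta\circ\Psi\circ\alpha_\Theta$ is complex-linear, and its Choi matrix is defined by Eq.~\eqref{eq:choi-definition}. Equation~\eqref{eq:theta-equivariance} is then equivalent to $\Psi'=\Psi$. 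Since $\Psi\mapsto C_\Psi$ is injective on complex-linear maps (the matrix units $|\alpha\rangle\langle\beta|$ span $M_d(\CC)$), this is in turn equivalent to $C_{\Psi'}=C_\Psi$. Once the displayed identity is established, that is exactly condition~(ii) of Theorem~\ref{thm:choi-quaternionic}, and the corollary follows from that theorem.

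To compute the left-hand side, I will use $\mathfrak C^{-1}=\mathfrak C$, which holds because $\mathfrak C^2=I_{d^2}$. Write $\mathfrak C=(J\otimes J)\kappa_{d^2}$. For any matrix $Y$ one has $\kappa Y\kappa=\overline Y$, and therefore
\[
 \mathfrak C\,Y\,\mathfrak C^{-1}=(J\otimes J)\,\overline Y\,(J\otimes J)^{-1}.
\]
Applying this to $Y=C_\Psi=\sum_{\alpha,\beta}|\alpha\rangle\langle\beta|\otimes\Psi(|\alpha\rangle\langle\beta|)$, entrywise conjugation acts factorwise on each elementary tensor. Since $|\alpha\rangle\langle\beta|$ is real, this gives
\[
 \mathfrak C C_\Psi\mathfrak C^{-1}=\sum_{\alpha,\beta}\alpha_\Theta\bigl(|\alpha\rangle\langle\beta|\bigr)\otimes\alpha_\Theta\bigl(\Psi(|\alpha\rangle\langle\beta|)\bigr).
\]
This uses Eq.~\eqref{eq:alpha-theta} in the form $\alpha_\Theta(X)=J\overline XJ^{-1}$.

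The one step needing care is re-indexing the first tensor factor, because $\alpha_\Theta$ is antilinear and one cannot simply "change basis" inside $\Psi$. I will use the explicit form of $J$. It is a real signed permutation matrix: $J|\alpha\rangle=s_\alpha|\pi(\alpha)\rangle$ with $s_\alpha\in\{\pm1\}$ and $\pi$ a permutation of $\{1,\dots,d\}$. Because $J^{-1}=J^{\mathsf T}$, this gives
\[
 \alpha_\Theta\bigl(|\alpha\rangle\langle\beta|\bigr)=s_\alpha s_\beta\,|\pi\alpha\rangle\langle\pi\beta|.
\]
Hence, using that $\alpha_\Theta$ is an involution,
\[
 |\alpha\rangle\langle\beta|=\alpha_\Theta\bigl(s_\alpha s_\beta|\pi\alpha\rangle\langle\pi\beta|\bigr)=s_\alpha s_\beta\,\alpha_\Theta\bigl(|\pi\alpha\rangle\langle\pi\beta|\bigr).
\]
The real sign passes through $\alpha_\Theta$ unchanged, and the same sign also passes through the complex-linear $\Psi$ and then through $\alpha_\Theta$ again. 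So each summand equals
\[
 (s_\alpha s_\beta)^2\,|\pi\alpha\rangle\langle\pi\beta|\otimes\Psi'\bigl(|\pi\alpha\rangle\langle\pi\beta|\bigr).
\]
Since $(s_\alpha s_\beta)^2=1$, reindexing by $(\pi\alpha,\pi\beta)$ yields $C_{\Psi'}$, which proves the identity.

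As a consistency check I will also note the direct route from (i). If $L\Theta=\Theta L$, then taking adjoints with the antiunitarity of $\Theta$ gives $L^\dagger\Theta=\Theta L^\dagger$. It follows that $\alpha_\Theta(LXL^\dagger)=L\,\alpha_\Theta(X)\,L^\dagger$, which yields Eq.~\eqref{eq:theta-equivariance} term by term. The main obstacle is only bookkeeping: tracking how $\kappa_{d^2}$ factors over the tensor product, and that the signs $s_\alpha s_\beta$ appear squared. Neither complete positivity nor trace preservation is needed for the identity itself; they enter only through Theorem~\ref{thm:choi-quaternionic}.
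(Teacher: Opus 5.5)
Your proof is correct and has the same skeleton as the paper's. Both establish the covariance identity $C_{\alpha_\Theta\circ\Psi\circ\alpha_\Theta}=\mathfrak C C_\Psi\mathfrak C^{-1}$, which is the paper's Eq.~\eqref{eq:choi-covariance-transform}, and then conclude from injectivity of the Choi map and $\alpha_\Theta^2=\id$. The difference is only in how that identity is verified.

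The paper takes a Kraus family $\{L_s\}$ of $\Psi$ and notes that $\alpha_\Theta\circ\Psi\circ\alpha_\Theta$ has Kraus operators $\alpha_\Theta(L_s)$. It then reuses the vectorization identity $\operatorname{vec}(\alpha_\Theta(L))=\mathfrak C\operatorname{vec}(L)$ from Eq.~\eqref{eq:choi-vector-fixed-calculation}, so each rank-one term $|L_s\rangle\!\rangle\langle\!\langle L_s|$ is conjugated by $\mathfrak C$. Given the theorem's proof this takes two lines, but it goes through a Kraus form, which is available because $\Psi$ is completely positive.

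You work directly from the matrix-unit definition of $C_\Psi$. You factor $\kappa_{d^2}$ across the tensor product and absorb the action of $J$ on the first factor by reindexing along the signed permutation $J|\alpha\rangle=s_\alpha|\pi\alpha\rangle$. The signs cancel as $(s_\alpha s_\beta)^2$. This costs more bookkeeping and uses the specific real orthogonal form of $J$. In return, it shows the identity holds for every complex-linear $\Psi$, and it makes explicit that complete positivity and trace preservation enter only through Theorem~\ref{thm:choi-quaternionic}.

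Your closing check of (i)$\Rightarrow$\eqref{eq:theta-equivariance}, via $L^\dagger\Theta=\Theta L^\dagger$, is the argument the paper later uses in Lemma~\ref{lem:theta-sector-preservation}.
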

\begin{proof}
Although $\alpha_\Theta$ is conjugate-linear, the composition
$\alpha_\Theta\circ\Psi\circ\alpha_\Theta$ is complex linear.  If
$\Psi(X)=\sum_sL_sXL_s^\dagger$, then
$\alpha_\Theta\circ\Psi\circ\alpha_\Theta$ has Kraus operators
$\alpha_\Theta(L_s)=J\overline{L_s}J^{-1}$.  By
Eq.~\eqref{eq:choi-vector-fixed-calculation},
$\operatorname{vec}(\alpha_\Theta(L_s))=\mathfrak C\operatorname{vec}(L_s)$; using
Eq.~\eqref{eq:choi-kraus} therefore gives
\begin{equation}
 C_{\alpha_\Theta\circ\Psi\circ\alpha_\Theta}
 =\mathfrak C C_\Psi\mathfrak C^{-1}.
 \label{eq:choi-covariance-transform}
\end{equation}
The Choi representation is injective and $\alpha_\Theta^2=\id$.  Hence
Eq.~\eqref{eq:choi-fixed} is equivalent to
$\Psi=\alpha_\Theta\circ\Psi\circ\alpha_\Theta$, which is precisely
Eq.~\eqref{eq:theta-equivariance}.
\end{proof}

The characterization is operationally useful because the Choi matrix is determined by the channel,
whereas individual Kraus families are not.  Liftability can therefore be tested by one linear
fixed-point condition on $C_\Psi$, and a compatible quaternionic Kraus family can be reconstructed
from a fixed eigenbasis.  It also locates the boundary of exact simulation: a generic complex CPTP
map on $\CC^{2N}$ need not preserve the quaternionic sector.  The forward direction---that quaternionic channels and POVMs are simulated by complex CPTP maps
and complex POVMs---is established \cite{Asorey2007,Graydon2013}.  The complementary statement in
Theorem~\ref{thm:choi-quaternionic} is a Kraus-independent membership test: a single linear
fixed-point equation on $C_\Psi$ decides membership in the symplectic image, a compatible
right-$\HH$-linear Kraus family is reconstructed from a $\mathfrak C$-fixed eigenbasis, and
complex trace preservation descends to the quaternionic normalization
$\sum_rA_r^\dagger A_r=I_N$.  The equivariance formulation is used
in Lemma~\ref{lem:theta-sector-preservation} to control both a corrected channel and its Heisenberg
adjoint.  Example~\ref{ex:channel-embed} then shows that this fixed-point test still includes
channels with genuinely noncommuting quaternionic phases.

\begin{example}[Noncommuting quaternionic phases]\label{ex:channel-embed}
Let $U:=\operatorname{diag}(i,j)\in\Sp(2)$ and
$|+\rangle:=(1,1)^{\mathsf T}/\sqrt2\in\HH^2$.  Then
\begin{equation}
 U|+\rangle\langle+|U^\dagger
 =\frac12\begin{pmatrix}1&-k\\ k&1\end{pmatrix}.
 \label{eq:qphase-example}
\end{equation}
The off-diagonal entries record the noncommutativity of the quaternionic phases.  The embedded
input $\chi(|+\rangle\langle+|)/2$ has complex rank two, and
Eq.~\eqref{eq:intertwine} reproduces the evolved quaternionic state exactly.  The example is
important because liftability is not a disguised ``real-matrix'' condition: the channel contains
genuinely noncommuting quaternionic phases, yet its doubled complex Choi matrix satisfies the
symplectic fixed-point criterion.
\end{example}

\section{Exact quaternionic quantum error correction}\label{sec:qec}

The question is whether a noisy quaternionic evolution can be reversed on a chosen logical
subspace.  Exact correction of complex subspaces is governed by the Knill--Laflamme theorem, and
operator quantum error correction extends the same compression principle to logical algebras
\cite{KnillLaflamme1997,KribsLaflammePoulinLesosky2006}.  A recent quaternionic treatment develops
a concrete code construction: quaternionic analogues of Pauli operators, an encoding of logical
qubits, and a quaternionic extension of the five-qubit code with an explicit syndrome list
\cite{Nyirahafashimana2025}.  That contribution is centered on a concrete code construction and its syndrome analysis;
the general right-module criterion, the restriction on admissible compression scalars, and the
intrinsic recovery developed below are not among the results stated there.  Kubischta and Teixeira
instead use physical time reversal on ordinary complex spin systems to show that symmetry makes
particular parity sectors of the Knill--Laflamme conditions automatic \cite{KubischtaTeixeira2026}.
That selection rule concerns Kramers-invariant complex codes and specified error algebras; it does
not supply the right-$\HH$-linear equivalence or recovery considered here.  Numerical performance
comparisons also require an explicit resource convention, because quaternionic dimension $N$ is
represented on complex dimension $2N$ and ranks double under $\chi$; no threshold comparison is
used below.  The issue addressed here is narrower: how the standard finite-dimensional criterion
descends to a right-$\HH$-linear subspace code, which scalars can occur in the compression, and how
the recovery can be written intrinsically.  The
setup is single-system and does not assume a tensor product of independent quaternionic systems.

Let $\mathcal C\subset\HH^N$ be a right-$\HH$ submodule of quaternionic dimension
$K\in\mathbb N$.  Let $P\in M_N(\HH)$ be its orthogonal projector in the sense of
Eq.~\eqref{eq:orthogonal-projection-definition}; explicitly,
$P=P^\dagger=P^2$ and $\operatorname{ran}P=\mathcal C$.  A state
$\rho\in\mathcal D_N(\HH)$ is supported on the code when $P\rho P=\rho$.

Let $m\in\mathbb N$ and let $E_1,\ldots,E_m\in M_N(\HH)$ satisfy
$\sum_{a=1}^{m}E_a^\dagger E_a=I_N$.  The noise channel is
\begin{equation}
 \mathcal N(\rho):=\sum_{a=1}^{m}E_a\rho E_a^\dagger.
 \label{eq:noise-channel}
\end{equation}
The code is exactly correctable when there exists a quaternionic channel $\mathcal R$ such that
$\mathcal R\!\circ\!\mathcal N(\rho)=\rho$ for every density matrix supported on
$\mathcal C$.

\subsection{Why the Knill--Laflamme coefficients must be real}

In complex quantum mechanics, exact correction is characterized by compressions of
$E_a^\dagger E_b$ that act as scalars on the code.  Over $\HH$, the word ``scalar'' needs care.
The logical operator algebra is $M_K(\HH)$, whose center is $\RR I_K$, not all quaternionic
multiples of the identity.  A non-real quaternion multiplying code vectors from the left is a
logical operation rather than an invisible scalar.  Two inequivalent formalizations of ``acts as a
scalar'' must therefore be kept apart.

The first places the scalar to the right of the inner product, which is the position dictated by
right-linearity in the second argument.  Suppose a right-$\HH$-linear map
$T:\mathcal C\to\mathcal C$ and a quaternion $q\in\HH$ satisfy
\begin{equation}
 \langle\psi,T\phi\rangle_{\HH}
 =\langle\psi,\phi\rangle_{\HH}q
 \quad\text{for every }\psi,\phi\in\mathcal C.
 \label{eq:sesqui}
\end{equation}
Replace $\phi$ by $\phi r$, where $r\in\HH$.  Right-linearity of $T$ and of the second
argument of the inner product gives
\[
 \langle\psi,\phi\rangle_{\HH}(qr)
 =\langle\psi,\phi\rangle_{\HH}(rq).
\]
Taking a unit vector $\phi$ and setting $\psi=\phi$ yields $qr=rq$ for every
$r\in\HH$.  Hence $q$ belongs to the center $Z(\HH)=\RR$.  With $q$ real,
Eq.~\eqref{eq:sesqui} becomes
$\langle\psi,(T-qI_{\mathcal C})\phi\rangle_{\HH}=0$ for every $\psi,\phi\in\mathcal C$;
nondegeneracy of the quaternionic inner product then gives $T=qI_{\mathcal C}$.

The second formalization places the scalar to the left of the projector, $PFP=cP$ with
$c\in\HH$.  This is a well-posed right-$\HH$-linear equation, and it does \emph{not} by itself
force $c$ to be real; Example~\ref{ex:nonreal-compression} exhibits a normalized error family whose
left compressions have a genuinely imaginary coefficient.  Reality of the Gram matrix in the
theorem below is therefore not a formal consequence of Eq.~\eqref{eq:sesqui}: it is proved inside
the theorem by the antiunitary step, which uses exact correctability as a hypothesis.  What
Eq.~\eqref{eq:sesqui} records is the structural reason behind that outcome.  A correctable cross
product must act on the code as a scalar in the operational sense, in which the coefficient is read
off from the inner product with its intrinsic order, and only real coefficients survive that
requirement.

\begin{example}[A non-real left compression that is not correctable]\label{ex:nonreal-compression}
Let $N=K=2$, $\mathcal C=\HH^2$, and $P=I_2$.  Put
\[
 E_1:=\tfrac1{\sqrt2}I_2,
 \qquad
 E_2:=\tfrac1{\sqrt2}\,iI_2 .
\]
Then $E_1^\dagger E_1+E_2^\dagger E_2
=\tfrac12I_2+\tfrac12\overline i\,iI_2=I_2$, so $\{E_1,E_2\}$ is an admissible quaternionic noise
channel, while
\[
 PE_1^\dagger E_2P=\tfrac{i}{2}\,P .
\]
The left compression coefficient is not real.  The family is nevertheless not correctable.  For
$\psi_\pm:=(1,\pm j)^{\mathsf T}/\sqrt2$ one computes
\[
 \mathcal N(\psi_+\psi_+^\dagger)
 =\mathcal N(\psi_-\psi_-^\dagger)
 =\tfrac12I_2 ,
\]
so two distinct pure code states have the same output and no recovery exists.  In agreement with
Theorem~\ref{thm:KLH}, no real Gram matrix represents these compressions.  The example also shows
that the left-scalar equation is strictly weaker than Eq.~\eqref{eq:sesqui}.
\end{example}

\subsection{The exact criterion}

The complex subspace criterion and its operator-algebraic extensions supply the algebraic reference
point \cite{KnillLaflamme1997,KribsLaflammePoulinLesosky2006}.  Passing to $\HH$ is not a
formal substitution of scalars: the compression coefficients must lie in the real center,
quaternionic dimension one has to be treated separately at the level of states, and sufficiency must
be realized by a right-$\HH$-linear recovery.  The two lemmas isolate the fixed-sector and
algebra-generation steps needed to justify that passage.

The necessity proof uses two finite-dimensional facts.  The first is the channel-and-adjoint
version of Corollary~\ref{cor:choi-equivariance}: it records the antiunitary symmetry preserved by
every represented quaternionic channel and its adjoint.  The second shows that the
quaternionic Hermitian observables generate the full complex operator algebra on the doubled code.
The separate case $K=1$ is treated after the theorem.

\begin{lemma}[Preservation of the quaternionic fixed sector]\label{lem:theta-sector-preservation}
Let $d\in\mathbb N$, let $\Theta_d:\CC^d\to\CC^d$ be antiunitary, and define
$\alpha_{\Theta_d}(X):=\Theta_dX\Theta_d^{-1}$ on $M_d(\CC)$.  Let
$\Psi:M_d(\CC)\to M_d(\CC)$ have a Kraus family
$\Psi(X)=\sum_sL_sXL_s^\dagger$ satisfying $L_s\Theta_d=\Theta_d L_s$ for every $s$.  Then
\begin{equation}
 \Psi\circ\alpha_{\Theta_d}=\alpha_{\Theta_d}\circ\Psi,
 \qquad
 \Psi^*\circ\alpha_{\Theta_d}=\alpha_{\Theta_d}\circ\Psi^*.
 \label{eq:channel-and-adjoint-equivariance}
\end{equation}
Consequently, both $\Psi$ and its Hilbert--Schmidt adjoint $\Psi^*$ preserve the
$\alpha_{\Theta_d}$-fixed sector.  If $Q=\alpha_{\Theta_d}(Q)$ is a projection, then the compressed map
$X\mapsto Q\Psi^*(X)Q$ preserves the fixed part of the corner $QM_d(\CC)Q$.
\end{lemma}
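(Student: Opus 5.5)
The plan is to verify both equivariance identities in Eq.~\eqref{eq:channel-and-adjoint-equivariance} by direct substitution of the Kraus form. The corollary-level statements then follow formally. The one input is that conjugation by an antiunitary is an antilinear $*$-automorphism of $M_d(\CC)$. Because $\Theta_d$ is antiunitary and invertible, the map $\alpha_{\Theta_d}(X)=\Theta_dX\Theta_d^{-1}$ is conjugate-linear and multiplicative, and it satisfies
\[
 \alpha_{\Theta_d}(X^\dagger)=\alpha_{\Theta_d}(X)^\dagger .
\]
The adjoint identity comes from $\langle\Theta_d v,\Theta_d w\rangle_{\CC}=\overline{\langle v,w\rangle_{\CC}}$. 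One checks that the adjoint of $\Theta_dX\Theta_d^{-1}$ is $\Theta_dX^\dagger\Theta_d^{-1}$, using that $\Theta_d^{-1}$ is also antiunitary. I will record this first, since it is the only place the antiunitary convention enters.

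For the channel identity, I rewrite the hypothesis $L_s\Theta_d=\Theta_dL_s$ as $\alpha_{\Theta_d}(L_s)=L_s$. The $*$-property then gives $\alpha_{\Theta_d}(L_s^\dagger)=L_s^\dagger$. Multiplicativity and additivity (real-linearity suffices for the sum) give
\[
 \alpha_{\Theta_d}(\Psi(X))=\sum_s\alpha_{\Theta_d}(L_s)\,\alpha_{\Theta_d}(X)\,\alpha_{\Theta_d}(L_s)^\dagger=\Psi(\alpha_{\Theta_d}(X)).
\]
For the adjoint, I first identify $\Psi^*(Y)=\sum_sL_s^\dagger YL_s$ from the Hilbert--Schmidt definition~\eqref{eq:hilbert-schmidt-adjoint} by cyclicity of the trace. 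The same computation, with $L_s^\dagger$ in place of $L_s$, then gives the second identity. For general antiunitary $\Theta_d$ this does not need Corollary~\ref{cor:choi-equivariance}. In the specific case $\Theta_d=\Theta_n$ one could instead cite that corollary for $\Psi$ alone, but the direct argument covers the general $\Theta_d$ and the adjoint at once.

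Preservation of the fixed sector is then immediate. If $\alpha_{\Theta_d}(X)=X$, then $\alpha_{\Theta_d}(\Psi(X))=\Psi(X)$, and likewise for $\Psi^*$. For the compressed map, let $Q=\alpha_{\Theta_d}(Q)$ be a projection and let $X$ be fixed. Then
\[
 \alpha_{\Theta_d}(Q\Psi^*(X)Q)=Q\,\alpha_{\Theta_d}(\Psi^*(X))\,Q=Q\Psi^*(X)Q,
\]
so the output is fixed. It obviously lies in $QM_d(\CC)Q$, so the fixed part of the corner is preserved. If the intended domain is only the corner, the argument is unchanged, because $X=QXQ$ fixed is a special case.

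The main obstacle is bookkeeping rather than mathematics. Conjugate-linearity of $\alpha_{\Theta_d}$ must not be confused with complex-linearity of $\Psi$; the composite identities hold because $\Psi$ and $\Psi^*$ are sums of products, and real-linearity of $\alpha_{\Theta_d}$ is enough for those. The compatibility $\alpha_{\Theta_d}(X^\dagger)=\alpha_{\Theta_d}(X)^\dagger$ is the step to verify carefully from the antiunitary inner-product identity.
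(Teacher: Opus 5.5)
Your proposal is correct and follows the paper's own argument. You rewrite the commutation hypothesis as $\alpha_{\Theta_d}(L_s)=L_s$ and deduce $\alpha_{\Theta_d}(L_s^\dagger)=L_s^\dagger$. You then apply the multiplicative, $*$-compatible, conjugate-linear map $\alpha_{\Theta_d}$ term by term to the Kraus forms of $\Psi$ and $\Psi^*$, and finish by compressing with the fixed projection $Q$. Your explicit checks of $\alpha_{\Theta_d}(X^\dagger)=\alpha_{\Theta_d}(X)^\dagger$ and of $\Psi^*(Y)=\sum_sL_s^\dagger YL_s$ only fill in steps the paper leaves implicit.
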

\begin{proof}
The commutation relation is equivalent to $\alpha_{\Theta_d}(L_s)=L_s$, and it also implies
$\alpha_{\Theta_d}(L_s^\dagger)=L_s^\dagger$.  Applying $\alpha_{\Theta_d}$ to each Kraus term proves
both identities in Eq.~\eqref{eq:channel-and-adjoint-equivariance}.  Multiplication by the fixed
projection $Q$ then preserves the same sector.
\end{proof}

A \emph{complex unital $*$-subalgebra} of $M_{2K}(\CC)$ is a complex-linear subspace that
contains $I_{2K}$ and is closed under matrix multiplication and adjoints.  No infinite-dimensional operator-algebra machinery is needed in the next lemma.

\begin{lemma}[Generation by quaternionic Hermitian observables]\label{lem:qherm-generation}
For $K\ge2$, the smallest complex unital $*$-subalgebra of $M_{2K}(\CC)$ containing
$\chi(\operatorname{Herm}_K(\HH))$ is $M_{2K}(\CC)$, where
$\operatorname{Herm}_K(\HH):=\{H\in M_K(\HH):H=H^\dagger\}$.
\end{lemma}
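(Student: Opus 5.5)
The plan is to identify the generated algebra through its commutant, via the double commutant theorem in finite dimensions. Let $\mathcal A\subseteq M_{2K}(\CC)$ be the smallest complex unital $*$-subalgebra containing $\chi(\operatorname{Herm}_K(\HH))$. Since $\mathcal A$ is a finite-dimensional unital $*$-algebra, von Neumann's bicommutant theorem gives $\mathcal A=\mathcal A''$. It therefore suffices to show that the commutant $\mathcal A'$ consists only of multiples of $I_{2K}$. An element commutes with $\mathcal A$ exactly when it commutes with every generator $\chi(H)$, so the task reduces to describing the complex matrices that commute with all of $\chi(\operatorname{Herm}_K(\HH))$.

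First I will pass from Hermitian generators to the full image. Every $A\in M_K(\HH)$ decomposes as $A=H_1+H_2 u$-type combinations; more concretely, $A=\tfrac12(A+A^\dagger)+\tfrac12(A-A^\dagger)$, and the anti-Hermitian part is a real span of products of Hermitian matrices. For instance, $i(E_{\alpha\alpha})$ itself is not a product of Hermitian matrices, so I will instead use commutators $[H_1,H_2]$, which are anti-Hermitian, and check that their real span together with $\operatorname{Herm}_K(\HH)$ is all of $M_K(\HH)$ when $K\ge2$. The needed elements are quaternionic matrix units and imaginary diagonal entries. With $H_1=E_{12}+E_{21}$ and $H_2=qE_{12}+\overline qE_{21}$ for imaginary $q$, the commutator yields diagonal terms involving $\overline q-q=-2q$ in positions $(1,1)$ and $(2,2)$. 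This supplies $qE_{11}$-type elements up to corrections, and products $H_1H_2$ give off-diagonal entries with arbitrary quaternion coefficients. Hence the real algebra generated by $\operatorname{Herm}_K(\HH)$ is $M_K(\HH)$, and so $\mathcal A$ contains $\chi(M_K(\HH))$.

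Next I will compute the commutant of $\chi(M_K(\HH))$. By Proposition~\ref{prop:symplectic-image}, $\chi(M_K(\HH))$ is the real fixed algebra of $\alpha_K$. Its complex span is all of $M_{2K}(\CC)$: any $X$ can be written as $X=\tfrac12(X+\alpha_K(X))+i\cdot\tfrac{1}{2i}(X-\alpha_K(X))$, and both parts are $\alpha_K$-fixed because $\alpha_K$ is a conjugate-linear involution. A complex $*$-algebra containing $\chi(M_K(\HH))$ therefore contains $M_{2K}(\CC)$. This step makes the commutant computation unnecessary, and it also shows that the bicommutant route can be bypassed entirely once the real generation in the preceding paragraph is established.

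I expect the main obstacle to be the generation step, namely showing that Hermitian quaternionic matrices generate all of $M_K(\HH)$ as a real algebra. This step genuinely needs $K\ge2$. For $K=1$, $\operatorname{Herm}_1(\HH)=\RR$, and the generated algebra is only $\CC I_2$, which is consistent with the separate treatment of $K=1$. The care lies in verifying that the commutator construction produces every purely imaginary diagonal entry $qE_{\alpha\alpha}$. I would check this with explicit $2\times2$ computations embedded in positions $\alpha,\beta$ and then use real-linear combinations to finish.
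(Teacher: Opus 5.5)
Your final argument is the paper's proof: first show that the real algebra generated by $\operatorname{Herm}_K(\HH)$ is all of $M_K(\HH)$ (this uses $K\ge2$). Then use the $\alpha_K$-splitting $X=\tfrac12(X+\alpha_K(X))+i\,\tfrac1{2i}(X-\alpha_K(X))$ to see that $\chi(M_K(\HH))$ spans $M_{2K}(\CC)$ over $\CC$, so, as you observe, the bicommutant detour is unnecessary.

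Your ``up to corrections'' in the diagonal step is closed most cleanly by products rather than commutators, as the paper does. With $h_{ab}(q)=qe_{ab}+\overline q\,e_{ba}$ one has $h_{ab}(q)e_{bb}=qe_{ab}$, and then $qe_{ab}\,e_{ba}=qe_{aa}$, where $e_{ba}=e_{bb}(e_{ab}+e_{ba})$. Contrary to your aside, $ie_{aa}=h_{ab}(i)\,e_{bb}\,h_{ab}(1)$ is in fact a product of Hermitian matrices.
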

\begin{proof}
Let $e_{ab}$ be the $K\times K$ matrix unit.  The diagonal projections $e_{aa}$ are Hermitian.
For $a\ne b$ and $q\in\{1,i,j,k\}$, the matrix
\[
 h_{ab}(q):=q e_{ab}+\overline q\,e_{ba}
\]
is Hermitian, and $h_{ab}(q)e_{bb}=q e_{ab}$.  Products with $e_{ba}$ then give
$q e_{aa}$.  Hence the real algebra generated by the Hermitian quaternionic matrices contains
every quaternionic matrix unit and is all of $M_K(\HH)$.

For the complexification step, take $X\in M_{2K}(\CC)$ and use the logical-dimension involution
$\alpha_K(X)=J_K\overline XJ_K^{-1}$.  Define
\[
 X_+:=\frac{X+\alpha_K(X)}2,
 \qquad
 X_-:=\frac{X-\alpha_K(X)}{2i}.
\]
Conjugate-linearity gives
$\alpha_K(X_+)=X_+$ and $\alpha_K(X_-)=X_-$.  Hence both matrices belong to
$\chi(M_K(\HH))$ by
Proposition~\ref{prop:symplectic-image}, and $X=X_++iX_-$.  Thus the complex linear span of
$\chi(M_K(\HH))$ is $M_{2K}(\CC)$, so the generated complex unital $*$-subalgebra is the full matrix algebra.
\end{proof}

For the doubled code, set
\begin{equation}
 Q:=\chi(P),
 \qquad
 \mathcal A_Q:=QM_{2N}(\CC)Q.
 \label{eq:code-corner-algebra}
\end{equation}
The algebra $\mathcal A_Q$ is called the \emph{corner algebra} supported on
$\operatorname{ran}Q$; its multiplicative identity is $Q$, not $I_{2N}$.  Its Hermitian
$\alpha_\Theta$-fixed part is
\begin{equation}
 \mathcal A_{Q,\Theta}^{\rm sa}
 :=\{X\in\mathcal A_Q:X=X^\dagger,\ \alpha_\Theta(X)=X\}.
 \label{eq:fixed-hermitian-corner}
\end{equation}
By Proposition~\ref{prop:symplectic-image}, this real vector space is the symplectic image of the
quaternionic Hermitian operators on $\mathcal C$.  To make that identification explicit, choose an
orthonormal basis of $\mathcal C$ and extend it to an orthonormal basis of $\HH^N$.  The resulting
basis-change matrix $U_{\mathcal C}\in\Sp(N)$ satisfies
$P=U_{\mathcal C}P_0U_{\mathcal C}^{\dagger}$, where
$P_0:=\operatorname{diag}(I_K,0_{N-K})$, and set
$\widehat U_{\mathcal C}:=\chi(U_{\mathcal C})$.  Define the isometry
\begin{align}
 S_K&:\CC^{2K}\longrightarrow\CC^{2N},\notag\\
 S_K(u\oplus v)&:=(u\oplus0_{N-K})\oplus(v\oplus0_{N-K}).
 \label{eq:standard-code-isometry}
\end{align}
Then $S_K^\dagger S_K=I_{2K}$ and $S_KS_K^\dagger=\chi(P_0)$.  Hence
\begin{equation}
 \beta_{\mathcal C}:M_{2K}(\CC)\longrightarrow\mathcal A_Q,
 \qquad
 \beta_{\mathcal C}(X)
 :=\widehat U_{\mathcal C}S_KXS_K^\dagger\widehat U_{\mathcal C}^{\dagger}
 \label{eq:code-corner-identification}
\end{equation}
is a unital $*$-isomorphism from $M_{2K}(\CC)$ onto the corner, with
$\beta_{\mathcal C}(I_{2K})=Q$.  The standard inclusion also intertwines the two symplectic
structures, $S_K\Theta_K=\Theta S_K$, while
$\widehat U_{\mathcal C}\Theta=\Theta\widehat U_{\mathcal C}$.  Consequently,
\begin{equation}
 \beta_{\mathcal C}(\alpha_K(X))
 =\alpha_\Theta(\beta_{\mathcal C}(X)),
 \qquad X\in M_{2K}(\CC).
 \label{eq:code-corner-involution-intertwining}
\end{equation}
Thus the $\alpha_K$-fixed logical algebra is mapped exactly onto the
$\alpha_\Theta$-fixed part of the physical corner.  For
$H\in\operatorname{Herm}_K(\HH)$ write
$\chi_{\mathcal C}(H):=\beta_{\mathcal C}(\chi(H))$.  If $\rho$ is supported on $\mathcal C$,
its logical coordinate $\rho_{\mathcal C}\in\mathcal D_K(\HH)$ satisfies
$\chi(\rho)=\beta_{\mathcal C}(\chi(\rho_{\mathcal C}))$ and therefore
\begin{equation}
 \Tr_{\CC}\!\left[\chi_{\mathcal C}(H)\frac{\chi(\rho)}2\right]
 =\Tr_{\HH}(H\rho_{\mathcal C}).
 \label{eq:code-corner-trace-pairing}
\end{equation}

\begin{theorem}[Quaternionic Knill--Laflamme theorem]\label{thm:KLH}
Assume that the code $\mathcal C\subset\HH^N$ has quaternionic dimension $K\ge2$.  For the
error family $\{E_a\}_{a=1}^{m}$, the following statements are equivalent:
\begin{enumerate}[label=(\roman*),leftmargin=1.6em]
\item there exists a quaternionic channel $\mathcal R$ that exactly corrects
$\mathcal N$ on every state supported on $\mathcal C$;
\item there exists a real positive-semidefinite matrix
$r=(r_{ab})\in M_m(\RR)$ such that
\begin{equation}
 PE_a^\dagger E_bP=r_{ab}P
 \qquad (1\le a,b\le m);
 \label{eq:KLH}
\end{equation}
\item there exists a positive-semidefinite complex matrix
$\lambda=(\lambda_{ab})\in M_m(\CC)$ such that the doubled code and errors obey
\begin{equation}
 Q\,\chi(E_a)^\dagger\chi(E_b)\,Q=\lambda_{ab}Q
 \qquad(1\le a,b\le m).
 \label{eq:complex-KL-on-corner}
\end{equation}
\end{enumerate}
When these conditions hold, the matrix $r$ is uniquely determined by the error family and code.  It
is real symmetric, $\Tr r=1$, and $\operatorname{rank}_{\RR}r$ is the number of nonzero syndrome
directions produced by the orthogonalization in the proof.
\end{theorem}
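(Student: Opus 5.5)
I will prove the cycle (ii)$\Rightarrow$(iii)$\Rightarrow$(i)$\Rightarrow$(ii), with the passage through the doubled complex picture doing most of the work.

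\textbf{(ii)$\Rightarrow$(iii).} This is immediate. Apply $\chi$ to Eq.~\eqref{eq:KLH}. Multiplicativity and $\chi(A^\dagger)=\chi(A)^\dagger$ give
\[
 Q\chi(E_a)^\dagger\chi(E_b)Q=r_{ab}Q,
\]
since $\chi(r_{ab}P)=r_{ab}\chi(P)$ for real $r_{ab}$. So one takes $\lambda:=r$, which is positive semidefinite as a complex matrix.

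\textbf{(iii)$\Rightarrow$(i).} I would build an intrinsic recovery by quaternionic Gram--Schmidt. First, $\lambda$ is Hermitian and positive semidefinite, and I claim it is actually real. Since $Q$ and each $\chi(E_a)$ are $\alpha_\Theta$-fixed, applying $\alpha_\Theta$ to Eq.~\eqref{eq:complex-KL-on-corner} gives $\overline{\lambda_{ab}}Q=\lambda_{ab}Q$, because $\alpha_\Theta$ is conjugate-linear. Hence $\lambda$ is real symmetric. By injectivity of $\chi$, the relation pulls back to $PE_a^\dagger E_bP=\lambda_{ab}P$, which already yields (ii) with $r=\lambda$.

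Next I diagonalize $r=O\,\mathrm{diag}(d_1,\dots,d_m)O^{\mathsf T}$ with $O$ real orthogonal and set $F_k:=\sum_a O_{ak}E_a$. Because $O$ is real, the $F_k$ are again a Kraus family for $\mathcal N$, and they satisfy $PF_k^\dagger F_lP=d_k\delta_{kl}P$. For $d_k>0$, the polar-type maps $F_kP/\sqrt{d_k}$ are right-$\HH$-linear partial isometries from $\mathcal C$ onto mutually orthogonal submodules. Let $\Pi_k:=F_kPF_k^\dagger/d_k$ be the corresponding projectors. The recovery is then
\[
 \mathcal R(\sigma):=\sum_k \frac{PF_k^\dagger}{\sqrt{d_k}}\,\sigma\,\frac{F_kP}{\sqrt{d_k}}+P_\perp\text{-term},
\]
where the final term completes $\sum_k R_k^\dagger R_k$ to $I_N$ by adding Kraus operators built from $I_N-\sum_k\Pi_k$. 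One then checks $\mathcal R\circ\mathcal N(\rho)=\rho$ for $P\rho P=\rho$ exactly as in the complex computation, since only associative matrix algebra is used. The trace identity $\sum_k d_k=\Tr r=1$ follows from $P\sum_aE_a^\dagger E_aP=P$. Uniqueness of $r$ follows from $P\ne0$, and the rank statement counts the $d_k>0$.

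\textbf{(i)$\Rightarrow$(ii).} This is where the lemmas enter, and it is the main obstacle. Represent $\widetilde{\mathcal R}\circ\widetilde{\mathcal N}$ via $\chi$. By Eq.~\eqref{eq:intertwine}, it fixes $\chi(\rho)$ for every $\rho$ supported on $\mathcal C$. The complex span of these $\chi(\rho)$ is the whole corner $\mathcal A_Q$: the Hermitian quaternionic code operators span $\chi$ of $\operatorname{Herm}_K(\HH)$ through $\beta_{\mathcal C}$, and by the second half of the proof of Lemma~\ref{lem:qherm-generation}, $\chi(M_K(\HH))$ complex-spans $M_{2K}(\CC)$. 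Real combinations of states give all of $\operatorname{Herm}_K(\HH)$ when $K\ge2$, and products of these reach $M_K(\HH)$, but here I need \emph{linear} spans rather than generated algebras. I will therefore use instead that the complex span of $\chi(\operatorname{Herm}_K(\HH))$ plus $i\,\chi(\operatorname{Herm}_K(\HH))$ together with the antihermitian part, obtained via the $X_\pm$ trick, covers $M_{2K}(\CC)$.

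Consequently, the complex CPTP map $\widetilde{\mathcal R}\circ\widetilde{\mathcal N}$ acts as the identity on all of $\mathcal A_Q$, that is, on every operator supported on $\operatorname{ran}Q$. Then the standard complex Knill--Laflamme argument on the corner gives $Q\chi(E_a)^\dagger\chi(E_b)Q=\lambda_{ab}Q$ with $\lambda\ge0$. Concretely, the Kraus operators $\chi(R_k)\chi(E_a)Q$ must all be multiples $c_{ka}Q$ of $Q$, and one sets $\lambda=C^\dagger C$. This is statement (iii), and the argument above turns it into (ii) with $r$ real.

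The delicate point is the spanning claim. It is exactly where $K\ge2$ is needed: for $K=1$ the code states span only $\RR Q$, so the argument fails there. Lemma~\ref{lem:theta-sector-preservation} is the fallback if spanning falls short. It would let me argue with the fixed Hermitian part $\mathcal A^{\rm sa}_{Q,\Theta}$ together with Lemma~\ref{lem:qherm-generation} in the Heisenberg picture: the compressed adjoint $Q\Psi^*(\cdot)Q$ fixes that set, so its fixed-point algebra contains the generated $*$-algebra, which is all of $\mathcal A_Q$.
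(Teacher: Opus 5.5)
Your (ii)$\Rightarrow$(iii), the reality step via $\alpha_\Theta$, and the Gram--Schmidt recovery are correct and coincide with the paper's argument. The gap is in (i)$\Rightarrow$(ii), at the claim that the complex span of the $\chi(\rho)$, with $\rho$ a code state, is all of $\mathcal A_Q$. This claim fails for every $K$, not only for $K=1$. Code states real-span $\chi_{\mathcal C}(\operatorname{Herm}_K(\HH))$, which has real dimension $2K^2-K$. Their complex span therefore has complex dimension at most $2K^2-K<4K^2=\dim_{\CC}\mathcal A_Q$; it is $\beta_{\mathcal C}$ of $\{X\in M_{2K}(\CC):(XJ_K)^{\mathsf T}=-XJ_K\}$.

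Your patch through $X=X_++iX_-$ is circular. The piece $X_-$ is $\alpha$-fixed but anti-Hermitian, so it is $\chi_{\mathcal C}$ of an anti-Hermitian quaternionic operator. Quaternionic Kraus maps preserve the Hermitian/anti-Hermitian splitting, so exactness on states tells you nothing, by linearity, about such operators. This is precisely what your complex Knill--Laflamme step needs. For $v\in\operatorname{ran}Q$, the rank-one projector $|v\rangle\langle v|$ has the nonzero component $X_-=\frac1{2i}(|v\rangle\langle v|-|\Theta v\rangle\langle\Theta v|)$, which lies outside the span you control.

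Your fallback sentence names the paper's actual route but omits everything that makes it work. Fixed points of a CP map do not in general form an algebra. Let $\widetilde{\mathcal T}$ represent $\mathcal R\circ\mathcal N$ and put $\Gamma:=Q\,\widetilde{\mathcal T}^{\,*}(\cdot)\,Q$. The paper proceeds in these steps:
\begin{enumerate}
\item It proves $\Gamma(Q)=Q$. Compression alone gives only $\Gamma(Q)\le Q$; equality uses Lemma~\ref{lem:theta-sector-preservation} to place $Q-\Gamma(Q)$ in $\mathcal A^{\rm sa}_{Q,\Theta}$, together with the nondegenerate pairing of that space with code states.
\item The same pairing gives $\Gamma(\chi_{\mathcal C}(H))=\chi_{\mathcal C}(H)$.
\item Since $H^2$ is again Hermitian, $\Gamma(\chi_{\mathcal C}(H)^2)=\Gamma(\chi_{\mathcal C}(H))^2$. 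Only this places $\chi_{\mathcal C}(H)$ in the multiplicative domain of the unital CP map $\Gamma$, and that domain is an algebra.
\item Lemma~\ref{lem:qherm-generation} then yields $\Gamma=\id$ on $\mathcal A_Q$. That lemma concerns generated algebras rather than spans, precisely because spans fail.
\item A final output-support argument returns to all complex states on $\operatorname{ran}Q$.
\end{enumerate}

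Alternatively, you can avoid the doubled picture entirely. Write $\mathcal R(X)=\sum_kR_kXR_k^\dagger$. Exactness on $\psi\psi^\dagger$ for a unit $\psi\in\mathcal C$, together with positivity, forces $R_kE_a\psi\in\psi\HH$. Compare $\psi_1+\psi_2$ and $\psi_2q$ for right-independent $\psi_1,\psi_2\in\mathcal C$; this is where $K\ge2$ enters. It shows $R_kE_aP=c_{ka}P$ with a single real $c_{ka}$. Then $\sum_kR_k^\dagger R_k=I_N$ gives $r=C^{\mathsf T}C$ directly.
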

\begin{proof}
We first prove (i)$\Rightarrow$(iii).  Let
$\mathcal T:=\mathcal R\circ\mathcal N$ and let $\widetilde{\mathcal T}$ be its complex
representation.  Every Kraus operator of $\widetilde{\mathcal T}$ commutes with $\Theta$, so
Lemma~\ref{lem:theta-sector-preservation} applies with $d=2N$ and $\Theta_d=\Theta$.  The \emph{compressed Heisenberg map} is the
Hilbert--Schmidt adjoint of the corrected channel, restricted and compressed to the code corner:
\begin{equation}
 \Gamma:\mathcal A_Q\longrightarrow\mathcal A_Q,
 \qquad
 \Gamma(X):=Q\,\widetilde{\mathcal T}^{\,*}(X)\,Q.
 \label{eq:compressed-heisenberg-map}
\end{equation}
It is completely positive and preserves the fixed Hermitian corner
$\mathcal A_{Q,\Theta}^{\rm sa}$.  Because the identity element of $\mathcal A_Q$ is $Q$, saying
that $\Gamma$ is \emph{unital on the corner} means precisely
$\Gamma(Q)=Q$.

To prove this equality, use
$0\le\widetilde{\mathcal T}^{\,*}(Q)\le I_{2N}$ and define
\[
 B:=Q-\Gamma(Q)=Q[I_{2N}-\widetilde{\mathcal T}^{\,*}(Q)]Q.
\]
The operator $B$ belongs to $\mathcal A_{Q,\Theta}^{\rm sa}$ and is positive.  For every
quaternionic code state $\rho=P\rho P$, exact correction, Proposition~\ref{prop:state-embedding},
and Schr\"odinger--Heisenberg duality give
\begin{align*}
 \Tr_{\CC}\!\left[B\frac{\chi(\rho)}2\right]
 &=1-\Tr_{\CC}\!\left[Q\,
   \widetilde{\mathcal T}\!\left(\frac{\chi(\rho)}2\right)\right]\\
 &=1-\Tr_{\CC}\!\left[Q\frac{\chi(\rho)}2\right]=0.
\end{align*}
By Eq.~\eqref{eq:fixed-hermitian-corner}, $B$ is the image of a Hermitian quaternionic operator on
the code.  Quaternionic density matrices span that real vector space.  Explicitly, for
$H=H^\dagger$ choose $c>\max_j|\lambda_j|$, put
$t:=\Tr_{\HH}(H+cI_K)$, $\rho_H:=(H+cI_K)/t$, and $\rho_*:=I_K/K$; then
$H=t\rho_H-cK\rho_*$.  The real trace pairing is nondegenerate: if $H\ne0$, the
quaternionic spectral theorem gives $\Tr_{\HH}(H^2)=\sum_j\lambda_j^2>0$.  Hence $B=0$
and $\Gamma(Q)=Q$.

Now let $H\in\operatorname{Herm}_K(\HH)$ and use the code-corner representative
$\chi_{\mathcal C}(H)$ from Eq.~\eqref{eq:code-corner-identification}.  The difference
\[
 D_H:=\Gamma(\chi_{\mathcal C}(H))-\chi_{\mathcal C}(H)
\]
also lies in $\mathcal A_{Q,\Theta}^{\rm sa}$.  Exact correction gives, for every quaternionic
code state $\rho$,
\begin{align*}
 \Tr_{\CC}\!\left[D_H\frac{\chi(\rho)}2\right]
 &=\Tr_{\CC}\!\left[\chi_{\mathcal C}(H)\,
 \widetilde{\mathcal T}\!\left(\frac{\chi(\rho)}2\right)\right]\\
 &\quad-\Tr_{\CC}\!\left[\chi_{\mathcal C}(H)\frac{\chi(\rho)}2\right]
 =0.
\end{align*}
The same nondegeneracy argument yields $D_H=0$.  Since $H^2$ is Hermitian,
\[
 \Gamma(\chi_{\mathcal C}(H)^2)=\chi_{\mathcal C}(H)^2=\Gamma(\chi_{\mathcal C}(H))^2.
\]
For a unital completely positive map $\Gamma$, its \emph{multiplicative domain} is the set of
operators $Z$ satisfying both
$\Gamma(Z^\dagger Z)=\Gamma(Z)^\dagger\Gamma(Z)$ and
$\Gamma(ZZ^\dagger)=\Gamma(Z)\Gamma(Z)^\dagger$; on this set $\Gamma$ preserves
left and right multiplication.  The multiplicative-domain theorem therefore places every
$\chi_{\mathcal C}(H)$ in that domain \cite{Choi1974Schwarz}.  Under the $*$-isomorphism $\beta_{\mathcal C}$, Lemma~\ref{lem:qherm-generation}
shows that these operators generate $\mathcal A_Q$, and therefore
\begin{equation}
 \Gamma(X)=X\qquad(X\in\mathcal A_Q).
 \label{eq:gamma-identity-corner}
\end{equation}

It remains to pass from the fixed quaternionic states to every complex state of the doubled code.
Let $\sigma\ge0$ satisfy $\Tr_{\CC}\sigma=1$ and $Q\sigma Q=\sigma$.  From
$\Gamma(Q)=Q$,
\begin{align}
 \Tr_{\CC}[Q\widetilde{\mathcal T}(\sigma)]
 &=\Tr_{\CC}[\widetilde{\mathcal T}^{\,*}(Q)\sigma]\notag\\
 &=\Tr_{\CC}[Q\widetilde{\mathcal T}^{\,*}(Q)Q\sigma]\notag\\
 &=1.
 \label{eq:output-support-calculation}
\end{align}
Put $Y:=\widetilde{\mathcal T}(\sigma)$.  Positivity and trace preservation imply
$\Tr_{\CC}[(I_{2N}-Q)Y]=0$.  The positive operator
$Y^{1/2}(I_{2N}-Q)Y^{1/2}$ therefore vanishes, so $Y=QYQ$.  For every
$X\in\mathcal A_Q$, Eqs.~\eqref{eq:compressed-heisenberg-map} and
\eqref{eq:gamma-identity-corner} then give
\[
 \Tr_{\CC}[X\widetilde{\mathcal T}(\sigma)]
 =\Tr_{\CC}[\Gamma(X)\sigma]
 =\Tr_{\CC}[X\sigma].
\]
The complex trace pairing on $\mathcal A_Q$ is nondegenerate, hence
$\widetilde{\mathcal T}(\sigma)=\sigma$.  The ordinary complex Knill--Laflamme theorem now yields
Eq.~\eqref{eq:complex-KL-on-corner}, proving (iii).

For (iii)$\Rightarrow$(ii), the left-hand side of
Eq.~\eqref{eq:complex-KL-on-corner} is $\chi(PE_a^\dagger E_bP)$ and is therefore fixed by
$\alpha_\Theta$.  Applying this conjugate-linear involution to the right-hand side replaces
$\lambda_{ab}$ by $\overline{\lambda_{ab}}$ while fixing $Q$.  Since $Q\ne0$,
$\lambda_{ab}=\overline{\lambda_{ab}}$.  Thus $r_{ab}:=\lambda_{ab}\in\RR$, and injectivity of
$\chi$ gives Eq.~\eqref{eq:KLH}.  The matrix $r$ remains positive semidefinite.

Finally assume (ii).  Choose a real orthogonal matrix $O\in M_m(\RR)$ satisfying
$OO^{\mathsf T}=I_m$ and
\begin{equation}
 OrO^{\mathsf T}=\operatorname{diag}(d_1,\ldots,d_m),
 \qquad d_\mu\ge0.
 \label{eq:gram-diagonalization}
\end{equation}
Define
$F_\mu:=\sum_{a=1}^{m}O_{\mu a}E_a$.  For each index with $d_\mu>0$, set
\begin{equation}
 V_\mu:=d_\mu^{-1/2}F_\mu P,
 \qquad
 Q_\mu:=V_\mu V_\mu^\dagger.
 \label{eq:partialiso}
\end{equation}
Equation~\eqref{eq:KLH} gives
$V_\mu^\dagger V_\nu=\delta_{\mu\nu}P$, so the ranges of the projections $Q_\mu$ are mutually
orthogonal syndrome sectors.  Define
\begin{equation}
 \mathcal R_0(X):=
 \sum_{\mu:d_\mu>0}V_\mu^\dagger Q_\mu XQ_\mu V_\mu,
 \qquad X\in M_N(\HH).
 \label{eq:recovery}
\end{equation}
For a code state $\rho=P\rho P$,
\begin{align}
 \mathcal N(\rho)
 &=\sum_{\mu:d_\mu>0}d_\mu V_\mu\rho V_\mu^\dagger,\notag\\
 \mathcal R_0(\mathcal N(\rho))
 &=\left(\sum_\mu d_\mu\right)\rho=\rho.
 \label{eq:recovery-on-code}
\end{align}
The last equality follows because compressing
$\sum_aE_a^\dagger E_a=I_N$ with $P$ gives
$\sum_\mu d_\mu=\Tr r=1$.

To complete $\mathcal R_0$ to a trace-preserving channel, put
$Q_{\rm syn}:=\sum_{\mu:d_\mu>0}Q_\mu$, choose a unit vector
$\psi_0\in\mathcal C$, and choose an orthonormal basis
$\{f_t\}_{t=1}^{N-\operatorname{rank}_{\HH}Q_{\rm syn}}$ of
$(I_N-Q_{\rm syn})\HH^N$.  Let $W_t:=\psi_0f_t^\dagger$ and define
\begin{equation}
 \mathcal R(X):=\mathcal R_0(X)+\sum_tW_tXW_t^\dagger.
 \label{eq:recovery-completion}
\end{equation}
The Kraus operators $V_\mu^\dagger Q_\mu$ and $W_t$ satisfy
\[
 \sum_{\mu:d_\mu>0}Q_\mu+\sum_t f_tf_t^\dagger=I_N,
\]
so $\mathcal R$ is trace preserving.  The added term acts only on the orthogonal complement of the
syndrome space and prepares the fixed code state $\psi_0\psi_0^\dagger$; it does not alter the
corrected noisy code states.  Proposition~\ref{prop:direct-sum-dilation} applied to this Kraus family
then gives a dilation of $\mathcal R^*$ on a Kraus-index direct sum, without introducing a bipartite
quaternionic tensor product.  Hence (ii)$\Rightarrow$(i).
\end{proof}

The theorem is a finite-dimensional right-quaternionic specialization of established complex and
operator-algebraic error-correction mechanisms, but the descent is not a change of symbols.  The
doubled complex criterion initially permits complex coefficients $\lambda_{ab}$; the antiunitary
symmetry forces them into $Z(\HH)=\RR$, and the recovery must then be reconstructed from
right-$\HH$-linear syndrome isometries.  The two lemmas supply the fixed-sector and generation
steps needed for that descent.  Example~\ref{ex:kl-nondiagonal} shows why the real Gram matrix has
operational content: it records correlations and degeneracies among errors before the syndrome
basis is orthogonalized.

Uniqueness of $r$ follows because $P\ne0$.  Taking adjoints makes it real symmetric, compressing
$\sum_aE_a^\dagger E_a=I_N$ gives $\Tr r=1$, and diagonalization shows that
$\operatorname{rank}_{\RR}r$ is exactly the number of positive $d_\mu$ and hence of nonzero
orthogonalized syndrome sectors in the displayed recovery.

Equation~\eqref{eq:KLH} has a direct physical interpretation.  The matrix element
$r_{ab}$ depends on the pair of errors but not on the encoded state.  Therefore the environment or
syndrome can learn which error sector occurred without learning the logical information stored in
$\mathcal C$.  Positive semidefiniteness of $r$ is the Gram-matrix condition that permits the
error sectors to be orthogonalized.

The theorem can equivalently be read as an error-detection statement.  For $K\ge2$, a right-$\HH$-linear operator $F$ is \emph{detectable by the code} precisely when
its compression is central, $PFP=cP$ with $c\in\RR$, so that its action on the code carries no
logical-state dependence.  The restriction $c\in\RR$ is part of the definition and is not
automatic: by Example~\ref{ex:nonreal-compression} an operator can satisfy $PFP=cP$ with
$c\notin\RR$, and it then acts as a nontrivial logical operation.  Thus a family $\{E_a\}$ is correctable exactly when every cross
product $E_a^\dagger E_b$ is detectable in this sense.

Degenerate codes are included without an additional hypothesis.  If $r$ is singular, the
orthogonalized eigenvalues in Eq.~\eqref{eq:gram-diagonalization} may satisfy $d_\mu=0$.  The
corresponding error combination obeys $F_\mu P=0$ and creates no independent syndrome sector; only
the indices with $d_\mu>0$ enter the recovery.

\subsection{The one-dimensional boundary}

If $K=1$, the normalized state space of the logical quaternionic system consists of the single
matrix $[1]$.  A recovery can discard the noisy output and prepare that unique state, so exact
state correction carries no logical information and does not force Eq.~\eqref{eq:KLH}.  This is
not a counterexample for a quaternionic two-level system: a quaterbit is $\HH^2$ and has
$K=2$.  The same degeneracy occurs for a one-dimensional complex code, so this boundary is not
specific to $\HH$.  What is specific is where the proof breaks: $K=1$ is exactly the case excluded
by Lemma~\ref{lem:qherm-generation}, since
$\chi(\operatorname{Herm}_1(\HH))=\RR I_2$ generates only the scalar multiples of $I_2$ and not
$M_2(\CC)$.

\subsection{A non-diagonal Gram matrix and degenerate syndromes}

The theorem is basis-independent in the error labels, but the recovery construction diagonalizes
the Gram matrix.  The following example shows why that step is substantive: an exactly correctable
channel may be presented by a nonorthogonal Kraus family, and singular Gram matrices encode
degenerate error combinations without requiring a separate theorem.

\begin{example}[A non-diagonal error Gram matrix]\label{ex:kl-nondiagonal}
Let $\{e_1,e_2,e_3,e_4\}$ be the standard basis of $\HH^4$, set
$\mathcal C:=\operatorname{span}_{\HH}\{e_1,e_2\}$, and let
$P:=\operatorname{diag}(I_2,0_2)$.  Define
\[
 S:=\begin{pmatrix}0&I_2\\I_2&0\end{pmatrix},
 \qquad PSP=0,
 \qquad S^\dagger S=I_4.
\]
For $p\in[0,1]$, first introduce the orthogonal syndrome errors
$F_0:=\sqrt{1-p}\,I_4$ and $F_1:=\sqrt p\,S$, and then choose the equivalent Kraus family
\begin{equation}
 E_0:=\frac{F_0+F_1}{\sqrt2},
 \qquad
 E_1:=\frac{F_0-F_1}{\sqrt2}.
 \label{eq:mixed-kraus-family}
\end{equation}
Real orthogonal mixing preserves the channel and its trace-preserving normalization.  Direct
compression gives
\begin{equation}
 PE_a^\dagger E_bP=r_{ab}P,
 \qquad
 r=\frac12\begin{pmatrix}
 1&1-2p\\[1mm]
 1-2p&1
 \end{pmatrix}.
 \label{eq:nondiagonal-r}
\end{equation}
For $p\ne\tfrac12$, the Gram matrix is not diagonal even though the code is exactly correctable.
The orthogonal diagonalization in Eq.~\eqref{eq:gram-diagonalization} recovers the original
syndrome basis $F_0,F_1$ with eigenvalues $1-p$ and $p$.  At $p=0$ or $p=1$, $r$ is singular;
the zero-eigenvalue combination annihilates the code, illustrating how degeneracy is handled by the
same construction.  Thus the example separates two issues that a diagonal toy model would hide:
Kraus labels need not coincide with orthogonal syndrome labels, and a singular Gram matrix records
redundant error combinations rather than failure of correctability.
\end{example}

\section{Composition, discard, and recoverability after a subsystem model is chosen}
\label{sec:qcomposition}

Three constructions must be kept separate.  A Kraus-index direct sum dilates a single-system channel
without defining subsystems; Theorem~\ref{thm:KLH} corrects a subspace inside one right quaternionic
module; erasure, complementary channels, and locality require an actual composite together with a
discard map.  This section identifies the extra data needed for the third problem and shows how the
exact and approximate erasure criteria reduce to standard complex statements after a composite has
been selected.

\subsection{Why two right quaternionic modules do not determine a canonical composite}

A balanced tensor product over $\HH$ requires one factor to carry a compatible left action; two
right modules alone do not determine it.  If $M$ is a right module and $L$ a left module, balancing
identifies $(uq)\otimes v$ with $u\otimes(qv)$ for $u\in M$, $v\in L$, and $q\in\HH$.
Even after such an action is chosen, the frequently proposed simple-tensor inner product
\begin{equation}
 \langle u_1\otimes u_2,v_1\otimes v_2\rangle
 :=\langle u_1,v_1\rangle_{\HH}\langle u_2,v_2\rangle_{\HH}
 \label{eq:badinner}
\end{equation}
need not respect the balancing relation.  In $\HH\otimes_{\HH}\HH$ one has
$i\otimes1=1\otimes i$.  Pairing these representatives with $j\otimes1$ gives
\begin{equation}
 \langle i,j\rangle_{\HH}=-k,
 \qquad
 \langle1,j\rangle_{\HH}\langle i,1\rangle_{\HH}=j(-i)=k.
 \label{eq:tensorcounter}
\end{equation}
The disagreement shows that Eq.~\eqref{eq:badinner} is not well defined.

The realification control example has a canonical repair because each factor already carries the
distinguished operator $\mathsf J$.  The relation
\begin{equation}
 (\mathsf J_Ax)\otimes y\sim x\otimes(\mathsf J_By)
 \label{eq:real-composition-contrast}
\end{equation}
reconstructs complex scalar balancing.  By contrast, to impose
\begin{equation}
 (xq)\otimes y\sim x\otimes(qy)
 \label{eq:quaternionic-balancing-choice}
\end{equation}
one must first choose a compatible left-$\HH$ action on the second factor.  That choice, together
with an inner product and discard, is additional physical data rather than a consequence of the two
right-module structures.

Several consistent frameworks make different choices; they are collected in
Table~\ref{tab:composition-ledger}.
\begin{table*}[t]
\centering\small
\caption{Composition frameworks for quaternionic systems.  The rows are alternatives, not
notations for one canonical tensor product.}
\label{tab:composition-ledger}
\begin{tabular}{p{0.18\textwidth}p{0.32\textwidth}p{0.42\textwidth}}
\toprule
Framework & Additional structure & Operational consequence\\
\midrule
Razon--Horwitz construction & A real tensor product of quaternion algebras, quotient modules, and a selected scalar product & A many-body model exists, but the extra algebraic choices are part of the theory\\
Ordered circuits & A total order on quaternionic gates & Each ordered computation has a complex simulation; changing the order can change the output\\
Euclidean Jordan algebras & A monoidal category containing real, complex, and quaternionic observable algebras & Composites exist, but two quaternionic factors generally produce a real observable algebra\\
Complex symplectic model & An ordinary complex tensor product together with antiunitary symmetry constraints & Standard subsystem operations exist inside a constrained complex theory\\
\bottomrule
\end{tabular}
\end{table*}
These alternatives are discussed by Razon and Horwitz, Fern\'andez and Schneeberger, Graydon, and
Barnum--Graydon--Wilce \cite{Razon1991,Fernandez2004,Graydon2013,Barnum2020}.  A statement about a
quaternionic subsystem must therefore name the chosen framework and its discard map.

For positive integers $n,m,s$, write $Q_n:=M_n(\HH)_{\rm sa}$ for the quaternionic-Hermitian
matrices and $R_s:=M_s(\RR)_{\rm sa}$ for the real symmetric matrices.  Denote the universal
Euclidean Jordan composite by $\otimes_u$ and the standard embedded composite by $\boxtimes$.
Barnum--Graydon--Wilce obtain
$Q_n\otimes_uQ_m\simeq R_{4nm}$ for $n,m>2$ and
$Q_2\otimes_uQ_2\simeq R_{16}^{\oplus4}$ in their composition table
\cite{Barnum2020}.  In the standard embedded product,
$Q_n\boxtimes Q_m\simeq R_{4nm}$, including
$Q_2\boxtimes Q_2\simeq R_{16}$; the special $Q_2$ calculation is given in their
Appendix~B, especially Corollary~B.5.  The composite is therefore well specified and associative,
but it is not a quaternionic matrix algebra of the naive size.

\subsection{Erasure as a constant complementary channel}

Fix finite-dimensional complex Hilbert spaces $\mathcal H_L$, $\mathcal H_A$, and
$\mathcal H_B$ in one selected composite realization, and let
$V:\mathcal H_L\to\mathcal H_A\otimes\mathcal H_B$ be an isometric encoder.  For $X\in\End_{\CC}(\mathcal H_L)$ define
\begin{align}
 \mathcal N_A(X)&:=\Tr_B(VXV^\dagger),\notag\\
 \mathcal N_B(X)&:=\Tr_A(VXV^\dagger).
 \label{eq:erasure-channels}
\end{align}
Here $\Tr_A$ and $\Tr_B$ are the ordinary complex partial traces over the indicated factors.
These maps only exist after the tensor product and the discard operations have been fixed.  Write
$d_A:=\dim\mathcal H_A$, and let $I_A$, $I_B$, and $I_L$ denote the identity operators on
$\mathcal H_A$, $\mathcal H_B$, and $\mathcal H_L$, respectively.

\begin{proposition}[Exact erasure criterion in a chosen composite]\label{prop:chosen-composite-erasure}
Choose an orthonormal basis $\{|a\rangle\}_{a=1}^{d_A}$ of $\mathcal H_A$ and define
\begin{equation}
 E_a:=(\langle a|\otimes I_B)V:\mathcal H_L\to\mathcal H_B.
 \label{eq:erasure-block-kraus}
\end{equation}
The following are equivalent:
\begin{enumerate}[label=(\roman*),leftmargin=1.6em]
\item there is a CPTP map
$\mathcal R_B:\End_{\CC}(\mathcal H_B)\to\End_{\CC}(\mathcal H_L)$ such that
$\mathcal R_B\circ\mathcal N_B=\id_{\mathcal H_L}$;
\item there is a density matrix $\sigma_A$ on $\mathcal H_A$ such that
$\mathcal N_A(X)=\Tr_{\CC}(X)\sigma_A$ for every
$X\in\End_{\CC}(\mathcal H_L)$;
\item there is a density matrix $\sigma_A$ on $\mathcal H_A$ such that
\begin{equation}
 E_b^\dagger E_a=(\sigma_A)_{ab}I_L
 \qquad(1\le a,b\le d_A).
 \label{eq:erasure-block-KL}
\end{equation}
\end{enumerate}
\end{proposition}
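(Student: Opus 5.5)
We prove (ii)$\Leftrightarrow$(iii) by a direct matrix-element computation, and (i)$\Leftrightarrow$(iii) by recognizing $\{E_a\}$ as a Kraus family for $\mathcal N_B$ and applying the ordinary complex Knill--Laflamme theorem to the full code $\mathcal H_L$. This is the complex theorem already invoked in the proof of Theorem~\ref{thm:KLH}.

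First, the block operators $E_a=(\langle a|\otimes I_B)V$ are Kraus operators for $\mathcal N_B$. Indeed, $\Tr_A(VXV^\dagger)=\sum_a(\langle a|\otimes I_B)VXV^\dagger(|a\rangle\otimes I_B)=\sum_aE_aXE_a^\dagger$. They also satisfy $\sum_aE_a^\dagger E_a=V^\dagger V=I_L$.

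Second, we compute the matrix elements of $\mathcal N_A$:
\[
 \langle a|\mathcal N_A(X)|b\rangle=\Tr_B\bigl[(\langle a|\otimes I_B)VXV^\dagger(|b\rangle\otimes I_B)\bigr]=\Tr_{\CC}(E_aXE_b^\dagger)=\Tr_{\CC}(E_b^\dagger E_aX).
\]
Now (ii) states that this equals $\Tr_{\CC}(X)(\sigma_A)_{ab}$ for every $X$. Because the Hilbert--Schmidt pairing is nondegenerate, this holds exactly when $E_b^\dagger E_a=(\sigma_A)_{ab}I_L$, which is (iii). Conversely, (iii) inserted into the display gives (ii). The matrix $\sigma_A$ is the same in both statements. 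Its being a density matrix is automatic in either direction: $\mathcal N_A$ is CPTP, so in case (ii) one can take $X$ to be a state to see that $\sigma_A$ is a state. In case (iii) the Gram form gives positivity, and $\Tr\sigma_A=1$ follows from $\sum_aE_a^\dagger E_a=I_L$.

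Third, for (iii)$\Rightarrow$(i), take the code projector $P=I_L$ in the complex Knill--Laflamme theorem. Its hypothesis $E_a^\dagger E_b=\lambda_{ab}I_L$ with $\lambda$ positive semidefinite is exactly (iii), with $\lambda_{ab}=(\sigma_A)_{ba}$. The theorem then supplies a CPTP recovery $\mathcal R_B$ with $\mathcal R_B\circ\mathcal N_B=\id$. Alternatively, one can repeat the explicit syndrome construction of Eqs.~\eqref{eq:gram-diagonalization}--\eqref{eq:recovery-completion} verbatim over $\CC$: diagonalize $\sigma_A$ unitarily, form partial isometries, and pad the recovery to be trace preserving.

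For (i)$\Rightarrow$(iii), the necessity half of the complex Knill--Laflamme theorem applies. Exact recoverability of the channel with Kraus family $\{E_a\}$ on all of $\mathcal H_L$ forces $E_a^\dagger E_b=\lambda_{ab}I_L$ with $\lambda\ge0$. We then set $\sigma_A:=\lambda^{\mathsf T}$ and check that its trace is $1$, as in the second step.

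The main obstacle is bookkeeping rather than substance. One must keep the index order $(\sigma_A)_{ab}$ versus $E_b^\dagger E_a$ consistent with the partial-trace and basis conventions, and ensure that the Knill--Laflamme Gram matrix is the transpose, which is still a density matrix. A secondary point is to note explicitly that the complex Knill--Laflamme theorem is used in its $P=I$ form on the code $\mathcal H_L$. This is legitimate because every state of $\mathcal H_L$ is a code state.
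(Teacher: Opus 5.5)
Your proposal is correct and follows the paper's proof essentially step for step: the entrywise expansion $\langle a|\mathcal N_A(X)|b\rangle=\Tr_{\CC}(E_b^\dagger E_aX)$ together with nondegeneracy of the trace pairing gives (ii)$\Leftrightarrow$(iii), and (i)$\Leftrightarrow$(iii) is the complex Knill--Laflamme theorem on the whole logical space with $\lambda=\sigma_A^{\mathsf T}$. The one point the paper states explicitly and you leave implicit is that the $E_a:\mathcal H_L\to\mathcal H_B$ map between different spaces. What is invoked is therefore the rectangular form of the Knill--Laflamme theorem, whose standard proof goes through unchanged.
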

\begin{proof}
Write $V=\sum_a|a\rangle\otimes E_a$.  Then
\begin{equation}
 \mathcal N_A(X)
 =\sum_{a,b}\Tr_{\CC}(XE_b^\dagger E_a)|a\rangle\langle b|.
 \label{eq:erasure-block-expansion}
\end{equation}
Equality with $\Tr_{\CC}(X)\sigma_A$ for every $X$ is equivalent, entry by entry, to
$\Tr_{\CC}[X(E_b^\dagger E_a-(\sigma_A)_{ab}I_L)]=0$ for every $X$.  Nondegeneracy of the
complex trace pairing gives Eq.~\eqref{eq:erasure-block-KL}; the converse follows by substitution,
so (ii) and (iii) are equivalent.

The retained channel has Kraus form
$\mathcal N_B(X)=\sum_aE_aXE_a^\dagger$.  Equation~\eqref{eq:erasure-block-KL} is precisely the
ordinary complex Knill--Laflamme condition for this channel on the whole logical input space, with
\begin{equation}
 \lambda_{ab}:=(\sigma_A)_{ba},
 \qquad \lambda=\sigma_A^{\mathsf T}.
 \label{eq:erasure-lambda-transpose}
\end{equation}
Because complex conjugation preserves positive semidefiniteness, $\lambda=\overline{\sigma_A}$ is positive semidefinite, and
$\Tr_{\CC}\lambda=1$.  The normalization also follows directly from
$V^\dagger V=\sum_aE_a^\dagger E_a=I_L$: summing the diagonal equations in
Eq.~\eqref{eq:erasure-block-KL} gives
$(\sum_a\lambda_{aa})I_L=I_L$.  The standard proof is unchanged for rectangular error maps: its necessity step uses the rank-one Choi matrix of the recovered identity channel, and its sufficiency step uses only the relations $E_a^\dagger E_b=\lambda_{ab}I_L$.  The rectangular form of the complex
Knill--Laflamme theorem therefore gives (i)$\Leftrightarrow$(iii)
\cite{KnillLaflamme1997}.  Combining the two equivalences proves the proposition.
\end{proof}

Proposition~\ref{prop:chosen-composite-erasure} makes the role of Section~\ref{sec:qec} explicit:
once a composite is chosen, erasure is governed by an ordinary Knill--Laflamme compression for the
block errors $E_a$.  At the exact level no separate information--disturbance theorem is required:
the block expansion identifies environmental constancy with the complex Knill--Laflamme equations,
whose recovery construction and necessity proof give both directions.  The quantitative
information--disturbance theorem is used only for the approximate statement in
Eq.~\eqref{eq:KSW-diamond}.  What is model dependent is not the algebraic criterion but the prior
choice of subsystems and discard.  The fixed state $\sigma_A$ need not be maximally mixed; for example,
$V|\psi\rangle=|0\rangle_A\otimes|\psi\rangle_B$ gives
$\mathcal N_A(\rho)=|0\rangle\langle0|$ for every $\rho$ and is perfectly correctable.

A useful no-go shows why imposing a maximally mixed erased marginal is too strong.  Let
$d_L:=\dim\mathcal H_L$, $d_A:=\dim\mathcal H_A$, and $d_B:=\dim\mathcal H_B$.  If $d_A=d_B$
and $\mathcal N_A(X)=\Tr_{\CC}(X)I_A/d_A$, then
\begin{equation}
 C_{\mathcal N_A}=I_L\otimes\frac{I_A}{d_A},
 \qquad
 \operatorname{rank}C_{\mathcal N_A}=d_Ld_A.
 \label{eq:maximally-mixed-replacement-choi}
\end{equation}
Any Stinespring realization with environment $\mathcal H_B$ has Choi rank at most $d_B$, so
$d_Ld_A\le d_B$ and hence $d_L\le d_B/d_A$.  In the balanced case $d_A=d_B$ this forces
$d_L\le1$.  A nontrivial code can have a constant erased state, but not
an obligatorily maximally mixed one in this balanced-dimension setting.

\subsection{Approximate recovery as approximate environmental constancy}

Let $\mathcal N:\End_{\CC}(\mathcal H_L)\to\End_{\CC}(\mathcal H_B)$ be the encoded noise channel
in the chosen complex realization, and let
$\widehat{\mathcal N}:\End_{\CC}(\mathcal H_L)\to\End_{\CC}(\mathcal H_A)$ be a complementary
channel obtained from a Stinespring dilation.  For a density matrix $\sigma$ on $\mathcal H_A$, let
$\mathcal S_\sigma(X):=\Tr_{\CC}(X)\sigma$.  The complex trace norm and diamond norm are
\begin{align}
 \|X\|_{1,\CC}&:=\Tr_{\CC}\sqrt{X^\dagger X},\notag\\
 \|\Lambda\|_{\diamond,\CC}
 &:=\sup_{r\ge1}\ \sup_{X\ne0}
 \frac{\|(\Lambda\otimes\id_r)(X)\|_{1,\CC}}{\|X\|_{1,\CC}}.
 \label{eq:complex-channel-norms}
\end{align}
In finite dimensions the reference dimension $r$ may be taken equal to the input dimension.  Define
\begin{align}
 \epsilon_{\rm rec}&:=\inf_{\mathcal R}
 \|\mathcal R\circ\mathcal N-\id_{\mathcal H_L}\|_{\diamond,\CC},\notag\\
 \delta_{\rm env}&:=\inf_{\sigma}
 \|\widehat{\mathcal N}-\mathcal S_\sigma\|_{\diamond,\CC}.
 \label{eq:eps-delta}
\end{align}
The first infimum is over CPTP recovery maps
$\mathcal R:\End_{\CC}(\mathcal H_B)\to\End_{\CC}(\mathcal H_L)$; the second is over density
matrices $\sigma$ on $\mathcal H_A$; and $\id_{\mathcal H_L}$ is the identity channel.  The
information--disturbance theorem gives
\begin{equation}
 \frac{\delta_{\rm env}^{\,2}}4
 \le\epsilon_{\rm rec}
 \le2\sqrt{\delta_{\rm env}}.
 \label{eq:KSW-diamond}
\end{equation}
Thus Proposition~\ref{prop:chosen-composite-erasure} is stable: exact correction corresponds to a
constant complementary channel, and approximate correction is controlled by the distance of the
entire complementary channel from the set of replacement channels
\cite{KSW2008,BenyOreshkov2010}.  The spectrum of one reduced state or the concurrence of one
codeword cannot establish this channel-level statement.

The logical dependency is now explicit.  Proposition~\ref{prop:direct-sum-dilation} realizes an
intrinsic channel without subsystems; Theorem~\ref{thm:KLH} corrects errors on one quaternionic
module; Proposition~\ref{prop:chosen-composite-erasure} and Eq.~\eqref{eq:KSW-diamond} apply only
after a composite and discard have been selected and represented complex-symplectically.  None of
these steps creates a canonical quaternionic partial trace.

\section{Operational choices beyond associativity}\label{sec:oct-fork}

The quaternionic analysis works because scalar multiplication and sequential operator composition
remain associative.  Octonions force a fork at precisely that point.  They underlie exceptional
structures such as $G_2$, the Albert algebra, and the Cayley plane and have repeatedly appeared in
generalized quantum models \cite{GunaydinPironRuegg1978,DeLeoAbdelKhalek1996,Baez2002}, but the
phrase ``octonionic quantum mechanics'' does not specify one operational theory.  The useful
question is instead: after associativity is lost, which definitions of states, probabilities,
dynamics, composites, and recovery remain available, and in which model?

The answer is organized as a decision tree.  Para-linear Hilbert theory keeps genuinely octonionic
amplitudes by modifying linearity and composition.  Cochain twisting moves nonassociativity into a
categorical associator.  Fixed quaternionic sectors restore the complete associative theory already
developed.  Jordan models keep octonionic state geometry in both spin-factor and exceptional cases, Clifford
envelopes keep associative operators, and Moufang registers keep the finite multiplication law in
complex labels.  The basic
associator calculation below explains why these routes cannot be merged without additional choices.

Let
\begin{equation}
 \OO:=\Span_{\RR}\{1,e_1,\ldots,e_7\}
 \label{eq:octonion-basis}
\end{equation}
be the real octonion division algebra.  For
$x=x_0+\sum_{i=1}^{7}x_i e_i$, define
\begin{align}
 \overline x&:=x_0-\sum_{i=1}^{7}x_i e_i,\notag\\
 |x|_{\OO}&:=\sqrt{x\overline x}
 =\left(x_0^2+\sum_{i=1}^{7}x_i^2\right)^{1/2}.
 \label{eq:octonion-norm}
\end{align}
The associated real Euclidean inner product is
$\langle x,y\rangle_{\RR}:=\RePart(\overline x y)$, so ``orthogonal'' and ``orthonormal'' in the
octonionic sections always refer to this real inner product and the scalar norm $|\cdot|_{\OO}$.
These are distinct from $|\cdot|_{\HH}$, $\|\cdot\|_{\mathcal H_{\HH}}$, and the complex
channel norms defined earlier.  The transition from quaternions is not obtained by replacing
$\HH$ with $\OO$ in the preceding formulas.  The obstruction is the failure of associativity,
which changes the relation between scalar multiplication and sequential operator composition.

For $a,b,c\in\OO$, define the associator
\begin{equation}
 [a,b,c]:=(ab)c-a(bc).
 \label{eq:oct-associator}
\end{equation}
The norm $|\cdot|_{\OO}$ is multiplicative because $\OO$ is a normed division algebra \cite{Baez2002,ConwaySmith2003}.  The octonions are alternative: the associator is alternating, and any two elements generate an
associative subalgebra \cite{Schafer1966,Baez2002,ConwaySmith2003}.  This explains why
quaternionic sectors are abundant, but it does not make a product of three arbitrary octonions
independent of bracketing.

For $a\in\OO$, let $L_a\in\End_{\RR}(\OO)$ be left multiplication,
$L_a(x):=ax$.  Since composition in $\End_{\RR}(\OO)$ is associative, it cannot coincide with
the nonassociative octonionic product on every triple.  The discrepancy is exact:
\begin{equation}
 L_aL_b-L_{ab}=-[a,b,\cdot\,].
 \label{eq:left-obstruction}
\end{equation}
Indeed, both sides applied to $x\in\OO$ equal $a(bx)-(ab)x$.  Consequently, there is no faithful
unital algebra homomorphism from $\OO$ into an associative algebra that preserves the full
octonionic product: associativity of the target would send every associator to zero.

This is a precise obstruction, not a blanket statement that octonionic Hilbert spaces or operators
do not exist.  It says only that ordinary associative operator composition cannot faithfully be
identified with global octonionic multiplication.  A valid octonionic model must therefore state
which product, composition law, positive cone, and notion of subsystem it uses.

For a concrete nonassociative calculation, fix the oriented Fano lines
\begin{equation}
 \begin{gathered}
 (1,2,4),\ (2,3,5),\ (3,4,6),\ (4,5,7),\\
 (5,6,1),\ (6,7,2),\ (7,1,3).
 \end{gathered}
 \label{eq:fano-lines}
\end{equation}
The convention means that cyclic products along each listed line are positive and reversing an
ordered pair changes the sign.  Then
\[
 (e_1e_2)e_3=e_4e_3=-e_6,
 \qquad
 e_1(e_2e_3)=e_1e_5=e_6,
\]
so
\begin{equation}
 [e_1,e_2,e_3]=-2e_6\ne0.
 \label{eq:explicit-octonion-associator}
\end{equation}
This one example already shows why a sequence of ordinary operators cannot be identified globally
with multiplication by a single octonion independently of bracketing.

\begin{table*}[t]
\centering\small
\caption{Operationally distinct octonionic models.  Each row has its own state space and dynamics.}
\label{tab:oct-models}
\begin{tabular}{@{}p{0.17\textwidth}p{0.24\textwidth}p{0.25\textwidth}p{0.24\textwidth}@{}}
\toprule
Model & States and measurements & Dynamics & Error-correction status\\
\midrule
Para-linear Hilbert model & Octonionic Hilbert modules and para-linear inner-product functionals & Para-linear operators with regular composition and corrected adjoint & An intrinsic CP/Choi/QEC theory is not yet supplied by the cited framework\\
Cochain-twist category & Positive and $\CPstar$ objects in a twisted graded Hilbert category & $\CPM$/$\CPstar$ morphisms and categorical dilations & Complete categorical semantics, but equivalent to ordinary complex graded theory\\
Fixed quaternionic sector & Quaternionic density matrices, effects, and POVMs in $S^N$ with $S\cong\HH$ & Sector-preserving quaternionic channels & The quaternionic Choi and Knill--Laflamme results apply\\
Jordan model ($H_2$, $H_3$) & Positive trace-one elements and effects in $H_2(\OO)$ or $H_3(\OO)$ & Jordan automorphisms or a separately chosen positive-map category & Exceptional-factor and higher-spin-factor categorical obstructions prevent an automatic global Kraus/QEC theory\\
Clifford/operator envelope & Ordinary real or complex density matrices on the algebra generated by $L_a$ & Ordinary associative CP maps & Standard QEC of the encoding, not intrinsic octonionic QEC\\
Finite Moufang register & Ordinary complex amplitudes labelled by $O_{16}$ & Reversible loop oracles and unitary transforms & Detects multiplication and bracketing defects in a complex register\\
\bottomrule
\end{tabular}
\end{table*}

The remaining subsections follow the rows of Table~\ref{tab:oct-models} as a sequence of operational
choices, not as competing notations for one unfinished formalism.  Para-linearity keeps genuine
$\OO$-valued amplitudes; cochain twisting relocates nonassociativity to categorical coherence; fixed
sectors restore associativity; Jordan models retain octonionic state geometry, with the spin-factor
and exceptional cases kept distinct; Clifford envelopes represent left multiplication by ordinary
operators; and Moufang registers isolate the discrete
multiplication law.  Terms such as ``octonionic state'' or ``octonionic channel'' are therefore
incomplete until one of these models has been selected.  The first route keeps octonionic amplitudes
most directly.

\subsection{Para-linear octonionic Hilbert and operator theory}\label{sec:paralinear}

The purpose of this model is to retain octonionic amplitudes and an octonionic-valued inner product
without pretending that ordinary linearity survives nonassociativity.  It supplies a legitimate
spectral and operator calculus that may serve as the analytic foundation for future dynamics; the
question examined here is exactly how far that foundation already reaches.

The ordinary representation obstruction does not prevent a nonassociative operator theory whose
notion of linearity is adapted to $\OO$.  Octonionic Hilbert spaces were introduced by Goldstine
and Horwitz \cite{GoldstineHorwitz1964,GoldstineHorwitz1966}.  Modern work develops the theory
through para-linearity, regular composition, and a corrected adjoint
\cite{HuoRen2022,HuoRen2023,HuoRenSabadini2025,HuoRenXu2026,HuoRenSabadiniXu2026}.

The module structure needed here is slightly more specific than an unspecified right module.
Let $M$ be a right $\OO$-module and let $M'$ be an $\OO$-bimodule.  Following the modern
bimodule theory, define the associative and central subspaces by
\begin{align}
 \mathcal A(M')&:=\{y\in M':(pq)y=p(qy)\text{ for all }p,q\in\OO\},\notag\\
 Z(M')&:=\{y\in M':py=yp\text{ for all }p\in\OO\}.
\end{align}
For an octonionic bimodule these spaces coincide,
$\RePart M'=\mathcal A(M')=Z(M')$ \cite{HuoRen2023}.  The canonical real-part projection
$\RePart_{M'}:M'\to\RePart M'$ is
\begin{equation}
 \RePart_{M'}(y)
 :=\frac{5}{12}y-\frac{1}{12}\sum_{i=1}^{7}e_iye_i.
 \label{eq:module-real-part}
\end{equation}
For a real-linear map $T:M\to M'$, a vector $x\in M$, and a scalar $p\in\OO$, define the
second right associator
\begin{equation}
 B_p(T,x):=T(x)p-T(xp).
 \label{eq:second-assoc}
\end{equation}
The map $T$ is right para-linear when
\begin{equation}
 \RePart_{M'}\!\left(B_p(T,x)\right)=0
 \qquad\text{for every }x\in M\text{ and }p\in\OO.
 \label{eq:paralinearity}
\end{equation}
This is the module-valued definition used in the modern operator theory
\cite{HuoRen2023,HuoRenSabadini2025}.  Strict right-linearity would require
$B_p(T,x)=0$; para-linearity retains only the vanishing of its projected real part.  For
$M'=\OO$, Eq.~\eqref{eq:module-real-part} reduces to the ordinary scalar real part, and the
condition is the one entering the octonionic Riesz representation theorem.

The operator theory does more than define a class of maps.  If $M''$ is an $\OO$-bimodule,
$S:M'\to M''$ is para-linear, and $T:M\to M'$ is real-linear, their regular composition
$S\Rcomp T$ is the unique para-linear map whose projected real part agrees with that of
$S\circ T$.  On bounded para-linear endomorphisms of a Hilbert $\OO$-bimodule, regular
composition and the corrected adjoint give an octonionic involutive Banach algebra in the
nonassociative sense of \cite{HuoRen2023,HuoRenSabadini2025}; regular composition is not
associative in general.  Spectral decomposition, partial isometries, and functional
calculi can then be formulated in that setting.  These results evade
Eq.~\eqref{eq:left-obstruction} by changing the composition law; they do not claim that ordinary
composition equals octonionic multiplication.

There is nevertheless an operational boundary.  Selinger's $\CPM$ construction starts from a
dagger compact category, whose morphism composition is associative \cite{Selinger2007}.  The
para-linear category under regular composition is nonassociative, so ordinary $\CPM$ cannot be
applied directly.  Passing to an associative real-linear envelope changes the morphism theory.  A
complete intrinsic quantum-channel model would still need a positive cone stable under the chosen
composition, complete positivity, trace preservation, a Choi object, a composite, and a discard
map.  The cited para-linear results provide a substantial operator foundation, but not yet all of
those operational structures.
Para-linearity preserves genuinely octonionic scalar values, but its regular composition is
nonassociative and an intrinsic CPTP category has not yet been derived from the cited framework.  A
second route changes the categorical associator instead of weakening linearity.

\subsection{The cochain-twist categorical model}\label{sec:cochain-model}

This route addresses a different goal: retain a fully compositional quantum-process calculus while
placing the octonionic associator in the monoidal structure rather than in the scalar field.  It is
therefore useful for deciding whether nonassociative multiplication can coexist with categorical
complete positivity and, equally importantly, whether the resulting theory is operationally new.

A different octonionic route keeps ordinary complex Hilbert spaces but modifies the monoidal
associator and braiding.  Let $G:=(\mathbb Z_2)^3$, written additively, and let
$\{u_g:g\in G\}$ be a homogeneous basis.  A normalized cochain is a function
$F:G\times G\to\{\pm1\}$ satisfying $F(0,g)=F(g,0)=1$ for every $g\in G$.
Choose $F$ so that, for all $g,h\in G$,
\begin{equation}
 u_g\cdot_Fu_h:=F(g,h)u_{g+h}
 \label{eq:twisted-oct-product}
\end{equation}
reproduces the complexified octonion multiplication.  The homogeneous labels $g\in G$ are not
identified term-by-term with the cyclic Fano indices in Eq.~\eqref{eq:fano-lines}; an explicit
coordinate identification differs only by a permutation of the seven nonzero labels.  For
$g,h,\ell\in G$, its coboundary
\begin{equation}
 \phi(g,h,\ell):=
 \frac{F(h,\ell)F(g,h+\ell)}{F(g+h,\ell)F(g,h)}
 \label{eq:cochain-associator}
\end{equation}
is the categorical associator, and
$R(g,h):=F(g,h)F(h,g)^{-1}$ gives the symmetry
\cite{AlbuquerqueMajid1999}.

Let $\mathcal C_F$ be the category of finite-dimensional $G$-graded complex Hilbert spaces with
degree-preserving linear maps, associator $\phi$, and symmetry $R$.  Let
$\mathcal C_0:=\mathrm{FdHilb}_G$ denote the same graded Hilbert spaces with the ordinary
associator and flip.  A \emph{dagger category} has an involutive contravariant operation
$f\mapsto f^\dagger$ on morphisms.  A \emph{strong dagger monoidal functor} carries tensor
products through invertible tensorators that are unitary; it is symmetric when those tensorators
also intertwine the two symmetries.  The symbols $\CPM(\mathcal C)$ and
$\CPstar[\mathcal C]$ denote the standard categorical constructions that pass from a dagger
category $\mathcal C$ to, respectively, completely positive processes and finite-dimensional
abstract quantum--classical systems.  The cochain realization of the octonions is due to Albuquerque and Majid
\cite{AlbuquerqueMajid1999}; more generally, cochain-twisted gauge theories are known to be
twisting-equivalent to their untwisted counterparts \cite{Majid2005}.  The point of the next
result is to state the finite-dimensional quantum-process consequence explicitly.  The underlying
monoidal untwisting is standard; what is checked here is that the tensorator is unitary and
dagger-compatible and therefore transports the $\CPM$ and $\CPstar$ operational constructions.

\begin{corollary}[Dagger operational consequence of cochain untwisting]\label{cor:dagger-untwist}
The identity-on-objects functor $\mathcal U_F:\mathcal C_F\to\mathcal C_0$ with tensorator
$\mu_{V,W}:V\otimes_0W\to V\otimes_FW$ given by
\begin{equation}
 \mu_{V,W}(v_g\otimes w_h):=F(g,h)^{-1}v_g\otimes w_h
 \label{eq:unitary-tensorator}
\end{equation}
for homogeneous vectors $v_g\in V_g$ and $w_h\in W_h$ is a strong unitary dagger symmetric
monoidal equivalence.  Consequently,
\begin{equation}
 \CPM(\mathcal C_F)\simeq\CPM(\mathcal C_0),
 \qquad
 \CPstar[\mathcal C_F]\simeq\CPstar[\mathcal C_0].
 \label{eq:cpm-cpstar-equivalence}
\end{equation}
The twisted octonion algebra object is carried to the ordinary group algebra $\CC[G]$.
\end{corollary}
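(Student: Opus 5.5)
The plan is to verify the monoidal-functor axioms directly on homogeneous components, then check the dagger conditions, and finally transport the $\CPM$ and $\CPstar$ constructions by functoriality.

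\textbf{Step 1: the hexagon for the associator.} Because every object of $\mathcal C_F$ is a direct sum of homogeneous pieces and all morphisms preserve degree, each coherence diagram may be checked on a triple of homogeneous vectors $u_g\otimes v_h\otimes w_\ell$. On such vectors every structure map acts by a sign. The monoidal-functor hexagon for $\mathcal U_F$ therefore reduces to one scalar identity: the product of the two tensorators along one bracketing, divided by the product along the other, must equal the associator ratio $\phi(g,h,\ell)$. With $\mu_{V,W}$ acting by $F(g,h)^{-1}$, the two paths contribute
\[
F(g,h)^{-1}F(g+h,\ell)^{-1}
\quad\text{and}\quad
F(h,\ell)^{-1}F(g,h+\ell)^{-1}.
\]
Their ratio is exactly the coboundary in Eq.~\eqref{eq:cochain-associator}, which is the definition of $\phi$. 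The only real care needed is orienting $\mu$ and $\phi$ consistently; if the orientations clash, I would replace $F$ by $F^{-1}$ in the convention. Normalization $F(0,g)=F(g,0)=1$ supplies the unit triangles.

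\textbf{Step 2: compatibility with the symmetry.} Symmetry compatibility is the identity $\mathrm{flip}\circ\mu_{V,W}=\mu_{W,V}\circ R_{V,W}$ on $v_g\otimes w_h$. It reads $F(g,h)^{-1}=F(h,g)^{-1}R(g,h)$, which holds by the definition $R(g,h)=F(g,h)F(h,g)^{-1}$.

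\textbf{Step 3: unitarity and dagger compatibility.} Since $F$ takes values in $\{\pm1\}$, each $\mu_{V,W}$ is a diagonal $\pm1$ map in an orthonormal homogeneous basis, hence unitary. The daggers in $\mathcal C_F$ and $\mathcal C_0$ are the same Hilbert adjoints on morphisms, and $\mathcal U_F$ is the identity on morphisms, so $\mathcal U_F$ commutes with $\dagger$. The functor is an equivalence because it is bijective on objects and on hom-sets. The inverse functor uses $\mu^{-1}$, and the coherence of that inverse follows formally.

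\textbf{Step 4: transport to $\CPM$ and $\CPstar$, and the algebra object.} Selinger's $\CPM$ and the $\CPstar$ construction are functorial for strong dagger symmetric monoidal functors with unitary coherence. Both are built from $\dagger$, $\otimes$, compact structure, and dagger Frobenius algebras, all of which are preserved up to the unitary $\mu$. So the equivalence of Step 3 induces the equivalences in Eq.~\eqref{eq:cpm-cpstar-equivalence}. It remains to check compact closure: duals in $\mathcal C_F$ transport to duals in $\mathcal C_0$ with cups and caps adjusted by the scalars $F(g,-g)$, which are still $\pm1$ and hence unitary.

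Finally, the algebra object $(\CC G,\cdot_F)$ is carried to the multiplication $m\circ\mu$, which sends $u_g\otimes u_h\mapsto F(g,h)^{-1}F(g,h)u_{g+h}=u_{g+h}$. That is precisely the group algebra $\CC[G]$.

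The main obstacle I expect is not Step 1, which is bookkeeping, but Step 4. There I must confirm that the $\CPstar$ construction really only requires a strong dagger symmetric monoidal equivalence with unitary tensorators, including the transported Frobenius structures and their specialness. I would cite the functoriality statements of the $\CPM$ and $\CPstar$ literature rather than reprove them.
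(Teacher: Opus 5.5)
Your proposal is correct and follows essentially the same route as the paper: reduce the associativity and symmetry coherence of $(\mathcal U_F,\mu)$ to sign identities on homogeneous components, get unitarity of $\mu$ from $F\in\{\pm1\}$, transport $\CPM$ and $\CPstar$ by functoriality, and check that the transported multiplication is $u_g\otimes u_h\mapsto u_{g+h}$. One harmless slip: with $\mu_{V,W}:V\otimes_0W\to V\otimes_FW$, the symmetric-functor axiom is $R_{V,W}\circ\mu_{V,W}=\mu_{W,V}\circ\mathrm{flip}_{V,W}$, which gives $R(g,h)=F(g,h)F(h,g)^{-1}$ exactly; your ordering gives the inverse, which agrees with the definition only because every scalar here is a sign. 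That same sign-invariance, rather than a change of convention, is also what settles your orientation worry in Step~1.
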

\begin{proof}
For homogeneous vectors, $\mu_{V,W}$ multiplies each degree-$(g,h)$ summand by the phase
$F(g,h)^{-1}$.  Hence $\mu_{V,W}^{\dagger}=\mu_{V,W}^{-1}=\mu_{V,W}$ because
$F(g,h)\in\{\pm1\}$.  Substituting homogeneous vectors of degrees $g,h,\ell$ into the
monoidal-functor pentagon reduces its two paths to the scalar identity
\[
 \phi(g,h,\ell)
 =\frac{F(h,\ell)F(g,h+\ell)}
        {F(g+h,\ell)F(g,h)},
\]
which is exactly Eq.~\eqref{eq:cochain-associator}.  The symmetry square is the identity, and the
symmetry compatibility diagram reduces to
$R(g,h)=F(g,h)F(h,g)^{-1}$.  Thus $\mathcal U_F$ is a strong unitary dagger symmetric
monoidal equivalence.

Dagger duals, evaluations, and coevaluations transport through a unitary dagger monoidal
equivalence, so $\mathcal C_F$ is dagger compact whenever $\mathcal C_0$ is.  Selinger's
$\CPM$ construction and the normalisable $\CPstar$ construction are functorial under such an
equivalence \cite{Selinger2007,CoeckeHeunenKissinger2014}, which gives
Eq.~\eqref{eq:cpm-cpstar-equivalence}.

Finally, let $m_F(u_g\otimes u_h)=F(g,h)u_{g+h}$ be the twisted multiplication.  The
multiplication transported to $\mathcal C_0$ is
\[
 m_0:=\mathcal U_F(m_F)\circ\mu_{\CC[G],\CC[G]},
 \qquad
 m_0(u_g\otimes u_h)=u_{g+h}.
\]
Hence the distinguished twisted algebra object becomes the ordinary group algebra $\CC[G]$.
\end{proof}

This result has both a positive and a limiting interpretation.  The twisted category has
well-defined categorical positive maps, completely positive maps, Choi objects, tensor products,
discard maps, and categorical dilations.  However, because $G$ is finite abelian,
\begin{equation}
 \CC[G]\cong\CC^8
 \label{eq:group-algebra-c8}
\end{equation}
as a commutative finite-dimensional $*$-algebra.  The model is therefore dagger-equivalent to ordinary complex $G$-graded quantum theory; it is not
an inequivalent theory obtained by using $\OO$ as a new scalar field, and it does not define
intrinsic para-linear complete positivity.  A different route is to retain genuinely hypercomplex
dynamics by restricting all operational data to an associative quaternionic subalgebra of $\OO$.

\subsection{Associative quaternionic sectors inside the octonions}\label{sec:oct-sectors}

Fixed quaternionic sectors are the most immediately operational octonionic regime.  They identify
the largest associative ``islands'' on which states, POVMs, channels, Choi matrices, and exact error
correction are already available without modification.  Their limitation is equally precise: the
noise and the recovery must preserve the chosen sector.

Let $\Imag\OO:=\{x\in\OO:\overline x=-x\}$ be the imaginary subspace.  If
$u,v\in\Imag\OO$ satisfy
$\langle u,u\rangle_{\RR}=\langle v,v\rangle_{\RR}=1$ and
$\langle u,v\rangle_{\RR}=0$, then
\begin{equation}
 S_{u,v}:=\Span_{\RR}\{1,u,v,uv\}\cong\HH
 \label{eq:quaternionic-sector}
\end{equation}
is an associative quaternionic subalgebra.  The seven oriented lines of a fixed Fano diagram give
coordinate examples, while all quaternionic sectors form the homogeneous space $G_2/SO(4)$
\cite{HarveyLawson1982,Baez2002}.

The existence and geometry of quaternionic subalgebras of $\OO$ are standard
\cite{HarveyLawson1982,Baez2002}.  A fixed sector inherits the entire quaternionic operational
theory because all amplitudes, operators, and errors remain inside an associative copy of $\HH$.
The corollary below makes this inheritance precise by transporting the state, channel, Choi, and
error-correction results through a fixed algebra isomorphism $S\cong\HH$.

\begin{corollary}[Sector-preserving reduction]\label{cor:sector-reduction}
Fix a quaternionic subalgebra $S\subset\OO$ and a unital $*$-algebra isomorphism
$\iota:S\to\HH$.  Let $N\in\mathbb N$, let
$\mathcal C\subset S^N$ be a right-$S$ submodule of $S$-dimension at least two, let
$P\in M_N(S)$ be its orthogonal projector.  Let $m\in\mathbb N$ and let
$E_1,\ldots,E_m\in M_N(S)$ satisfy
$\sum_{a=1}^{m}E_a^\dagger E_a=I_N$.  Exact correction by a
sector-preserving channel is equivalent to the existence of a real positive-semidefinite matrix
$r=(r_{ab})\in M_m(\RR)$ satisfying
\begin{equation}
 PE_a^\dagger E_bP=r_{ab}P
 \qquad(1\le a,b\le m).
 \label{eq:sector-KL}
\end{equation}
The state embedding, Choi criterion, recovery, and direct-sum dilation also transport through
$\iota$.
\end{corollary}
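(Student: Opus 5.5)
The plan is to transport everything through the fixed isomorphism $\iota:S\to\HH$ and then invoke Theorem~\ref{thm:KLH} verbatim. First, extend $\iota$ entrywise to $\iota_N:M_N(S)\to M_N(\HH)$ and to $\iota:S^N\to\HH^N$. Because $S$ is an associative subalgebra of $\OO$, every product of entries of elements of $S^N$ and $M_N(S)$ stays inside $S$. Bracketing is therefore irrelevant, and $\iota_N$ is a unital real-algebra isomorphism. Since $\iota$ is a $*$-isomorphism, it commutes with conjugation, so $\iota_N(A^\dagger)=\iota_N(A)^\dagger$. It also preserves real parts, so $\Tr_{\HH}\iota_N(A)=\RePart\sum_\alpha A_{\alpha\alpha}$, with the real part taken in $\OO$; this holds because $\iota$ fixes $\RR=S\cap\RR$ and maps the imaginary part of $S$ to $\Imag\HH$.

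The $S$-valued inner product $\sum_\alpha\overline{x_\alpha}y_\alpha$ is carried to $\langle\cdot,\cdot\rangle_{\HH}$. Right-$S$ submodules go to right-$\HH$ submodules of the same dimension, and orthogonal projectors go to orthogonal projectors. Thus $\iota_N(P)$ is the projector of $\iota(\mathcal C)$, whose quaternionic dimension is $K\ge2$. Hermiticity, positivity $\langle x,Ax\rangle\ge0$, density matrices, POVMs and Kraus normalization $\sum_aE_a^\dagger E_a=I_N$ are all defined by identities among such products. Each of them is therefore transported bijectively.

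Next, I will make precise what a sector-preserving channel means: a map $\mathcal R(X)=\sum_sR_sXR_s^\dagger$ with Kraus operators $R_s\in M_N(S)$ and $\sum_sR_s^\dagger R_s=I_N$. Conjugation by $\iota_N$ gives $\iota_N\circ\mathcal R=\mathcal R^\iota\circ\iota_N$, where $\mathcal R^\iota$ is a quaternionic channel with Kraus operators $\iota_N(R_s)$. Conversely, every quaternionic channel pulls back through $\iota_N^{-1}$ to a sector-preserving one. The same holds for the noise $\mathcal N$. Exact correction of all code states is an identity $\mathcal R\circ\mathcal N(\rho)=\rho$, so it transports exactly. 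Hence (i) of Theorem~\ref{thm:KLH} holds for $(\iota(\mathcal C),\iota_N(E_a))$ if and only if sector-preserving correction holds.

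For condition (ii), $\iota_N(PE_a^\dagger E_bP)=\iota_N(P)\iota_N(E_a)^\dagger\iota_N(E_b)\iota_N(P)$, and $\iota_N(r_{ab}P)=r_{ab}\iota_N(P)$ because $r_{ab}$ is real. So Eq.~\eqref{eq:sector-KL} is equivalent to Eq.~\eqref{eq:KLH} with the same real positive-semidefinite matrix $r$.

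The remaining assertions follow by composing with $\iota_N$ and $\iota_N^{-1}$: the state embedding becomes $\chi\circ\iota_N$, the Choi criterion is applied to the complex representation via $\chi\circ\iota_N$, and the recovery of Eq.~\eqref{eq:recovery-completion} and the dilation of Proposition~\ref{prop:direct-sum-dilation} are pulled back by $\iota_N^{-1}$.

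The main obstacle is the transport step. I must check carefully that no octonionic bracketing ambiguity enters any quantity used in the theorems. Octonionic entries outside $S$ must never appear: the recovery's auxiliary vectors $\psi_0$ and $f_t$ must be chosen in $S^N$, and the real part and conjugation used for the trace and the adjoint must agree with those of $\HH$ under $\iota$. I would verify this using Artin's theorem: products among elements of $S$ stay in $S$ and associate, so every formula reduces to one inside $M_N(S)\cong M_N(\HH)$.
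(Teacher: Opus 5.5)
Your proposal is correct and follows the paper's proof. Both apply $\iota$ entrywise to identify $S^N$, $M_N(S)$, code projectors, Kraus families, and states with their quaternionic counterparts, then read off Theorem~\ref{thm:KLH}, Theorem~\ref{thm:choi-quaternionic}, Proposition~\ref{prop:state-embedding}, and Proposition~\ref{prop:direct-sum-dilation} through that identification. Two of your additions make explicit what the paper leaves implicit: reading ``sector-preserving'' as Kraus operators in $M_N(S)$, and checking that the recovery's auxiliary vectors $\psi_0, f_t$ lie in $S^N$. Artin's theorem is not actually needed, since associativity of $S$ already follows from $\iota$ being an algebra isomorphism onto $\HH$.
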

\begin{proof}
Applying $\iota$ entrywise identifies $S^N$ with $\HH^N$ and $M_N(S)$ with $M_N(\HH)$ while
preserving multiplication, conjugation, adjoints, positivity, and the real trace.
Proposition~\ref{prop:symplectic-image} and Proposition~\ref{prop:state-embedding} transport the
fixed operator and state sectors, Theorem~\ref{thm:choi-quaternionic} transports the Choi criterion,
and Theorem~\ref{thm:KLH} transports Eq.~\eqref{eq:sector-KL} together with its recovery.  The
direct-sum dilation is transported entrywise in the same way.
\end{proof}

If $g\in G_2:=\operatorname{Aut}(\OO)$, then $gS$ is another quaternionic sector and the
transported code, projector, and errors satisfy the same equation with the same matrix $r$.  This
is covariance of the sectorwise theorem.  It does not cover noise that moves amplitudes through
several incompatible sectors; such a process depends on associators and is not a binary
quaternionic channel.

Sector restriction gives a complete channel and QEC theory, but only after a fixed copy of $\HH$
has been selected.  Jordan models take the opposite route: they preserve octonionic state geometry
without assuming an underlying associative Kraus algebra.

\subsection{Jordan state spaces: spin factors and composite boundaries}\label{sec:jordan-models}

The Jordan route keeps the convex geometry of octonionic states and effects rather than an
octonionic amplitude module.  Its purpose is to provide a well-defined one-system probabilistic
model and then ask how far it can be composed with other systems.  This makes the section
operationally relevant even though it does not produce a Kraus theory by itself.

A second intrinsic octonionic model does not use $\OO^N$ as a Hilbert module.  For
$n\in\{2,3\}$, let $H_n(\OO)$ denote the Hermitian $n\times n$ octonionic matrices.  For $x,y\in H_n(\OO)$, define the
Jordan product
\begin{equation}
 x\circ y:=\tfrac12(xy+yx).
 \label{eq:jordan-product}
\end{equation}
With this product, these spaces are Euclidean Jordan algebras.  The product is defined by the displayed
symmetrization; it is not ordinary associative matrix multiplication.  The two cases are not on the
same footing.  The algebra $H_2(\OO)$ is isomorphic to the ten-dimensional spin factor
$J\mathrm{Spin}_9\cong V_9$ and is therefore \emph{special}; only the Albert algebra
$H_3(\OO)$ is exceptional \cite{Baez2002}.  These cases meet different categorical boundaries.
The direct-summand obstruction below concerns the exceptional Albert algebra.  The spin factor
$V_9$ avoids that particular obstruction, but the dagger-compact framework of
Barnum--Graydon--Wilce cannot be enlarged to include higher spin factors while retaining compact
closure and the representation of states as morphisms \cite{Barnum2020}.

The positive cone is
\begin{equation}
 H_n(\OO)_+:=\{x\circ x:x\in H_n(\OO)\}.
 \label{eq:jordan-positive-cone}
\end{equation}
A normalized state is an element $\rho\in H_n(\OO)_+$ with
$\Tr\rho=1$, where $\Tr$ is the real sum of the diagonal entries.  An effect is an element $e\in H_n(\OO)$ satisfying $0\le e\le I_n$, where $I_n$ is the
$n\times n$ identity matrix, and the probability rule is
\begin{equation}
 p(e\mid\rho):=\Tr(e\circ\rho).
 \label{eq:jordan-born}
\end{equation}
Primitive idempotents of $H_2(\OO)$ give $\OP^1\cong S^8$, while those of the Albert algebra
$H_3(\OO)$ give the Cayley plane $\OP^2$ \cite{GunaydinPironRuegg1978,Baez2002}.

This layer supplies states, effects, probabilities, and reversible Jordan automorphisms.  It does
not by itself choose a tensor product, a Kraus representation, a complementary channel, or a
Knill--Laflamme theorem.  Under the non-signalling and categorical composite axioms studied by
Barnum, Graydon, and Wilce, an exceptional Jordan algebra cannot occur as a nonclassical
composite factor except against an essentially classical system \cite{Barnum2020}.  Their
categorical analysis gives a separate restriction for $H_2(\OO)\cong V_9$: higher spin factors
cannot be added to the real--complex--quaternionic dagger-compact framework while preserving its
state-as-morphism interpretation.  These are imported boundary results.  A different composite
proposal is not ruled out in advance, but its axioms and operational maps must be supplied
explicitly.

Jordan models provide positive cones and reversible symmetries, yet their composite restrictions do
not automatically supply a global Kraus or QEC formalism.  An associative operator envelope solves a
different problem: it represents octonionic left multiplication on an ordinary real space while
remembering exactly where multiplication ceases to be faithful.

\subsection{The associative Clifford envelope}\label{sec:clifford-envelope}

The Clifford envelope has a concrete implementation purpose: represent octonionic left
multiplications by ordinary real or complex matrices so that they can be simulated, composed, and
subjected to standard channel or error-correction methods.  What is measured in this encoding is
not the scalar field itself but the discrepancy between sequential operators and a single octonionic
product, namely the associator defect.

Ordinary real-linear operators retain a useful associative shadow of octonionic multiplication.
Choose an orthonormal imaginary basis $e_1,\ldots,e_7$ of $\Imag\OO$ and let
$L_{e_i}\in\End_{\RR}(\OO)$ be left multiplication by $e_i$.  Alternativity implies
\begin{equation}
 L_{e_i}L_{e_j}+L_{e_j}L_{e_i}
 =-2\delta_{ij}I_8
 \qquad(1\le i,j\le7),
 \label{eq:clifford-envelope}
\end{equation}
where $\delta_{ij}$ is the Kronecker delta and $I_8$ is the identity on the underlying
$8$-dimensional real vector space.  These are the defining relations of the real Clifford algebra
$\Cl_{0,7}$, so the left multiplications give an $8$-dimensional real spinor representation
\cite{LawsonMichelsohn1989,DepiesSmithAshburn2023}.  Since $\Cl_{0,7}\cong
M_8(\RR)\oplus M_8(\RR)$ and $\OO$ is an irreducible module of real type, the associative
subalgebra of $\End_{\RR}(\OO)$ generated by $L_{e_1},\ldots,L_{e_7}$ is the image of one summand,
namely all of $M_8(\RR)$.  The envelope is therefore an ordinary real matrix algebra, which is
precisely why channels defined on it are channels of an encoding.

For $a,b\in\OO$, define the associator-defect operator
\begin{equation}
 A_{a,b}:=[a,b,\cdot\,]\in\End_{\RR}(\OO).
 \label{eq:defect-operator}
\end{equation}
Then Eq.~\eqref{eq:left-obstruction} becomes
\begin{equation}
 L_aL_b=L_{ab}-A_{a,b}.
 \label{eq:envelope-defect}
\end{equation}
An ordinary real or complex channel may act on the associative algebra generated by the
$L_{e_i}$.  Such a channel is a channel of an encoding.  It is not an intrinsic octonionic Kraus
map, because sequential composition $L_aL_b$ differs from multiplication by the single octonion
$ab$ whenever $A_{a,b}\ne0$.

The Clifford envelope is operationally conventional but changes the scalar theory into a real
operator encoding.  The final route keeps only the finite multiplication table and studies it as
reversible data in an ordinary complex register.

\subsection{Finite Moufang loops and quasialgebraic models}\label{sec:moufang-models}

The finite-loop route isolates the discrete nonassociative multiplication table.  It is useful for
reversible multiplication oracles, exact verification of Moufang identities, and algorithms that
compare different bracketings without introducing ambiguous octonionic amplitudes.

The signed octonion basis
\begin{equation}
 O_{16}:=\{\pm1,\pm e_1,\ldots,\pm e_7\}
 \label{eq:o16}
\end{equation}
is closed under octonionic multiplication and forms a finite Moufang loop.  Let
$\mathcal H_{16}:=\CC^{16}$ with orthonormal basis
$\{|g\rangle:g\in O_{16}\}$.  This is an ordinary complex Hilbert space whose labels carry a
nonassociative multiplication table.  For labels $g,h,k\in O_{16}$, reversible multiplication oracles and coherent comparisons
of $(gh)k$ with $g(hk)$ are therefore legitimate quantum circuits, but the amplitudes are complex,
not octonionic.

The corresponding loop algebra is nonassociative and can be studied using quasialgebras,
quasimatrices, and Hopf quasigroups \cite{AlbuquerqueMajid1999,KlimMajid2010}.  These structures
supply algebraic analogues of group multiplication, coproducts, and antipodes.  They do not by
themselves determine a positive cone, a discard map, or a completely positive channel category.
Those operational ingredients must either be added separately or obtained from the cochain-twist
model of Section~\ref{sec:cochain-model}.

The finite-loop and Clifford-envelope routes are useful precisely because their operational meaning
is explicit.  The former is a complex register with nonassociative labels; the latter is an
associative operator encoding of left multiplication.  Neither should be identified with a global
Hilbert space over $\OO$.

\section{Conclusion and open problems}\label{sec:conclusion}

The real and quaternionic constructions exhibit the same general lesson in two different forms.
Complex quantum mechanics can be written on a real Hilbert space, but the orthogonal complex
structure $\mathsf J$, its commutant, and the balanced composite remain part of the operational
theory.  Quaternionic quantum mechanics can be represented on a doubled complex Hilbert space, but
the antiunitary symplectic structure $\Theta$, its fixed operator algebra, and the normalization
$\rho\mapsto\chi(\rho)/2$ remain essential.  Exact simulation therefore does not imply that the
encoded scalar structure has been removed.

Within the quaternionic fixed sector, the Choi criterion gives a representation-independent test for
whether a complex CPTP map admits quaternionic Kraus operators.  Its proof is a symplectic
specialization of the fixed-real-form Choi mechanism known from imaginarity theory.  The
right-quaternionic Knill--Laflamme theorem goes further into the noncommutative structure: the
doubled complex condition is shown to descend to an intrinsic criterion with coefficients in the
real center, including the one-dimensional state-space boundary, degenerate error families, and an explicit
right-$\HH$-linear recovery.  The surrounding Hilbert-module, trace, POVM, complex-simulation,
Stinespring, and information--disturbance ingredients are established results used in a common
finite-dimensional convention.

For octonions, the conclusion is a controlled separation rather than a single process theory.
Para-linear Hilbert spaces retain octonionic amplitudes but presently lack an intrinsic CPTP and QEC
package.  Cochain twisting has complete categorical semantics but, by Corollary~\ref{cor:dagger-untwist}, untwists to ordinary complex
graded quantum theory.  Fixed quaternionic sectors inherit the full quaternionic theory by
Corollary~\ref{cor:sector-reduction}.  Jordan,
Clifford-envelope, and Moufang-register models retain, respectively, octonionic state geometry,
associative operators, and the finite multiplication law, but they answer different operational
questions.  The comparison prevents these models from being conflated and identifies the data that
must be added before a global octonionic channel or recovery theorem can be stated.

Three concrete problems remain: defining complete positivity, discard, Choi objects, and
Stinespring dilation intrinsically for para-linear morphisms; constructing controlled dynamics that
move coherently among quaternionic sectors; and formulating recovery criteria for ternary bracketing
defects or multi-time nonassociative processes.

\appendix
\section{Auxiliary symplectic and antiunitary details}\label{app:symplectic-details}

This appendix records two short facts used in the state and Choi arguments.  They are consequences
of standard quaternionic spectral theory, but including them makes the normalization and
antiunitary steps independent of an external calculation.

\subsection{Spectral doubling and reflection of positivity}

Let $A=A^\dagger\in M_N(\HH)$.  The quaternionic spectral theorem gives
$A=UDU^\dagger$, where $U\in\Sp(N)$ and
$D=\operatorname{diag}(\lambda_1,\ldots,\lambda_N)$ with
$\lambda_a\in\RR$.  Since $D$ is real,
\begin{equation}
 \chi(D)=\begin{pmatrix}D&0\\0&D\end{pmatrix},
 \qquad
 \chi(A)=\chi(U)\chi(D)\chi(U)^\dagger.
 \label{eq:appendix-spectral-doubling}
\end{equation}
Thus every quaternionic eigenvalue occurs twice in the complex representation.  It follows
immediately that
\begin{align}
 A\ge0
 &\quad\Longleftrightarrow\quad
 \chi(A)\ge0,
 \notag\\
 \operatorname{rank}_{\CC}\chi(A)
 &=2\operatorname{rank}_{\HH}A.
 \label{eq:appendix-rank-positivity}
\end{align}
Together with Eq.~\eqref{eq:trace-doubling}, this also proves the converse direction of
Proposition~\ref{prop:state-embedding}: a positive $\Theta$-fixed complex density matrix is
$\chi(A)$ for a unique positive quaternionic $A$, and trace one forces
$\operatorname{Tr}_{\HH}A=1/2$.

\subsection{Fixed bases for an antiunitary involution}

Let $\mathcal E$ be a finite-dimensional complex Hilbert space and let
$C:\mathcal E\to\mathcal E$ be antiunitary with $C^2=I$.  For any $v\in\mathcal E$, set
\begin{equation}
 x:=\frac{v+Cv}{2},
 \qquad
 y:=\frac{v-Cv}{2i}.
 \label{eq:appendix-real-decomposition}
\end{equation}
Antilinearity gives $Cx=x$ and $Cy=y$, while $v=x+iy$.  Hence the fixed set
$\mathcal E^C:=\{w:Cw=w\}$ is a real Hilbert space whose complexification is all of
$\mathcal E$.  Real Gram--Schmidt therefore produces an orthonormal $C$-fixed basis of every
$C$-stable subspace.  Applied to the eigenspaces of the positive Choi matrix with
$C=\mathfrak C$, this is the fixed-eigenbasis step used in
Theorem~\ref{thm:choi-quaternionic}.

\section*{Data and code availability}
No experimental data are used.

\end{document}